\newcommand{\beq}{\begin{eqnarray*}}
\newcommand{\feq}{\end{eqnarray*}}
\newcommand{\beqn}{\begin{eqnarray}}
\newcommand{\feqn}{\end{eqnarray}}
\newtheorem{theorem}{Theorem}[section]
\newtheorem{lemma}[theorem]{Lemma}
\theoremstyle{definition}
\theoremstyle{remark}
\numberwithin{equation}{section}
\begin{document}
\title[Global solutions for Euler-Poisson system]{Global solutions for the two dimensional Euler-Poisson system with attractive forcing}
\author{Yongki Lee}
\address{Department of Mathematical Sciences, Georgia Southern University, Statesboro,  30458}
\email{yongkilee@georgiasouthern.edu}
\keywords{Critical thresholds, Euler-Poisson equations}
\subjclass{Primary, 35Q35; Secondary, 35B30}
\begin{abstract} 
The Euler-Poisson(EP) system describes the dynamic behavior of many important physical flows. 
In this work, a Riccati system that governs the flow's gradient is studied. The evolution of divergence 
is governed by the Riccati type equation with several nonlinear/nonlocal terms. Among these, the vorticity accelerates divergence while others suppress divergence and enhance the finite time blow-up of a flow. The growth of the latter terms are related to the Riesz transform of density and non-locality of these terms make it difficult to study global solutions of the multi-dimensional EP system.  Despite of these, we show that the Riccati system can afford to have global solutions 
under a suitable condition, and admits global smooth solutions for a large set of initial configurations. 
To show this, we construct an auxiliary system in 3D space and find an invariant space of the system, then comparison with the original 2D system is performed.
   The present work generalizes several previous so-called restricted/modified EP models.
\end{abstract}
\maketitle


\section{Introduction}

We are concerned with the threshold phenomenon in two-dimensional Euler-Poisson (EP) equations. The pressureless Euler-Poisson equations in multi-dimensions are
\begin{subequations} \label{101}
    \begin{equation} \label{101a}
        \rho_t + \nabla \cdot (\rho \mathbf{u})=0,
    \end{equation}
    \begin{equation} \label{101b}
        \mathbf{u}_t + \mathbf{u} \cdot \nabla \mathbf{u} = k \nabla \Delta^{-1} (\rho-c_b),
    \end{equation}
\end{subequations}
which are the usual statements of the conservation of mass and Newton's second law. Here $k$ is a physical constant which parameterizes the repulsive $k>0$ or attractive $k<0$ forcing, governed by the Poisson potential $\Delta^{-1}(\rho -c_b)$ with constant $c_b>0$ which denotes background state.
  The local density $\rho=\rho(t,x)$ : $\mathbb{R}^+ \times \mathbb{R}^n \mapsto \mathbb{R^+} $  and the velocity field $\mathbf{u}(t,x)$ : $\mathbb{R}^+ \times \mathbb{R}^n \mapsto \mathbb{R}^{n}$  are the unknowns. This hyperbolic system \eqref{101} with non-local forcing describes the dynamic behavior of many important physical flows, including  plasma with collision, cosmological waves, charge transport,  and the collapse of stars due to self gravitation.

There is a considerable amount of literature available on the solution behavior of Euler-Poisson system. Global existence due to damping relaxation and with non-zero back-ground can be found in \cite{Wang01}. For the model without damping relaxation, construction of a global smooth irrotational solution in three dimensional space can be found in \cite{Guo98}. Some related results on two dimensional case can be found in \cite{JLZ14, LW14, IP13}. One the other hand, we refer to \cite{E96, WW06, MP90} for singularity formation and nonexistence results.

We focus our attention on the questions of global regularity versus finite-time blow-up  of solutions for \eqref{101}. Many of the results mentioned above leave open the question of global regularity of solutions to \eqref{101} subject to more general conditions on initial configurations, which are not necessarily confined to a ``sufficiently small" ball of any preferred norm of initial data. In this regard, we are concerned here with so called Critical Threshold (CT) notion, originated and developed in a series of papers by Engelberg, Liu and Tadmor  \cite{ELT01, LT02, LT03} and more recently in various models \cite{TT14, BL20, YT19}. The critical threshold in \cite{ELT01} describes the conditional stability of the one-dimensional Euler-Poisson system, where the answer to the question of global vs. local existence depends on whether the initial data crosses a critical threshold. Following \cite{ELT01}, critical thresholds have been identified for several \emph{one-dimensional} models, e.g., $2 \times 2$ quasi-linear hyperbolic relaxation systems \cite{LL09}, Euler equations with non-local interaction and alignment forces \cite{TT14, CCTT16}, traffic flow models \cite{YT19}, and damped Euler-Poisson systems \cite{BL20}.

Moving to the \emph{multi-dimensional setup}, the main difficulty lies with the non-local nature of the forcing term $\nabla \Delta^{-1} \rho$, and this feature was the main motivation for studying the so called ``restricted" or ``modified" EP models \cite{Y16, Y17, LT03, Tan14}, where the nonlocal forcing term is replaced by a local or semi-local one. The regularity of the (original) Euler-Poisson equations in $n>1$ dimensions remains an outstanding open problem.

The goal of this paper is showing that, under a suitable condition, two-dimensional Euler-Poisson system with attractive forcing can afford to have global smooth solutions for a large set of initial configurations. In section \ref{section 2}, we seek the evolution of $\nabla \mathbf{u}$ and derive a closed ordinary differential equations (ODE) system which is nonlinear and nonlocal, and relate/review many previous works with the derived ODE system. In section 3, we discuss the motivation and highlights of the present work. In addition to this, we state our main results about global solutions to the EP system. The details of the proofs of those main results are carried out in Sections 4 and 5.

$$$$

\section{Problem formulation and related works}\label{section 2}
In this work, we consider two-dimensional Euler-Poisson equations with attractive forcing \eqref{101}.
We are mainly concerned with a Riccati system that governs  $\nabla \mathbf{u}$.
In order to trace the evolution of $\mathcal{M}:=\nabla \mathbf{u}$, we differentiate \eqref{101b}, obtaining
\begin{equation}\label{Meqn_pre}
\partial_t \mathcal{M} + \mathbf{u} \cdot \nabla \mathcal{M} + \mathcal{M}^2 = k \nabla \otimes \nabla \Delta^{-1} (\rho -c_b) = k R[\rho-c_b],
\end{equation}
where $R[\cdot] $ is the $2 \times 2$ Riesz matrix operator, defined as
$$R[h]:=\nabla \otimes \nabla \Delta^{-1}[h]=\mathcal{F}^{-1}\bigg{\{} \frac{\xi_i \xi_j}{|\xi|^2} \hat{h}(\xi)  \bigg{\}}_{i,j=1,2} .$$

We let $\frac{D}{Dt}[\cdot] = [\cdot]'$ be the usual material derivative, $\frac{\partial}{\partial t} + u \cdot \nabla$.
We are concerned with the initial value problem (1.2) or
\begin{equation}\label{Meqn}
\frac{D}{Dt}\mathcal{M} + \left(
                   \begin{array}{cc}
                     \mathcal{M}_{11}^2 + \mathcal{M}_{12}\mathcal{M}_{21} & (\mathcal{M}_{11}+\mathcal{M}_{22})\cdot \mathcal{M}_{12} \\
                      (\mathcal{M}_{11}+\mathcal{M}_{22}) \cdot \mathcal{M}_{21} & \mathcal{M}_{12}\mathcal{M}_{21} +\mathcal{ M}_{22}^2 \\
                   \end{array}
                 \right)
 = k\left(
                         \begin{array}{cc}
                           R_{11} [\rho-c_b]& R_{12} [\rho-c_b]\\
                           R_{21}[\rho -c_b] & R_{22} [\rho-c_b]\\
                         \end{array}
                       \right).
 \end{equation}
 subject to initial data
$$(\mathcal{M}, \rho)(0, \cdot) = (M_0 , \rho_0).$$
The global nature of the Riesz matrix $R[ \cdot ]$, makes the issue of regularity for Euler-Poisson equations such an intricate question to solve.

We introduce several quantities with which we characterize the behavior of the velocity gradient tensor $\mathcal{M}$. These are the trace $d:=\mathrm{tr} \mathcal{M} = \nabla \cdot \mathbf{u}$, the vorticity $\omega : = \nabla \times \mathbf{u} = \mathcal{M}_{21} - \mathcal{M}_{12}$ and quantities $\eta:= \mathcal{M}_{11} - \mathcal{M}_{22}$ and $\xi := \mathcal{M}_{12} + \mathcal{M}_{21}$. 
Taking the trace of \eqref{Meqn}, one obtain
\begin{equation}\label{Deqn}
\begin{split}
 d'&= - (\mathcal{M}^2 _{11} + \mathcal{M}^2 _{22}) -2\mathcal{M}_{12}\mathcal{M}_{21} + k (R_{11} [\rho-c_b]  + R_{22} [\rho-c_b])\\
&=-\bigg{\{ } \frac{(\mathcal{M}_{11} + \mathcal{M}_{22})^2}{2} +  \frac{(\mathcal{M}_{11} - \mathcal{M}_{22})^2}{2}   \bigg{\}} +  \frac{(\mathcal{M}_{21} - \mathcal{M}_{12})^2}{2} -  \frac{(\mathcal{M}_{12} + \mathcal{M}_{21})^2}{2}   + k(\rho-c_b)\\
&=-\frac{1}{2}d^2 -\frac{1}{2}\eta^2 + \frac{1}{2}\omega^2 -\frac{1}{2}\xi^2 + k(\rho-c_b).
\end{split}
\end{equation}

We can see that the equation \eqref{Deqn} is a Ricatti-type equation.
One can view the evolution of $d$ as the result of a contest between negative and positive terms in \eqref{Deqn}.  Indeed,  the vorticity accelerates divergence while $\eta$ and $\xi$ suppress divergence and enhance the finite time blow-up of a flow. The growth of $\eta$ and $\xi$ are related to the Riesz transform of density and non-locality of these terms make it difficult to study global solutions of the multidimensional EP system.

Our approach in this paper is to study the evolutions of $d=\nabla \cdot \mathbf{u}$ and it shall be carried out by tracing the dynamics of  $\eta$, $\omega$ and $\xi$. From matrix equation \eqref{Meqn}, and \eqref{101a}, we obtain
\begin{subequations}
    \begin{equation}\label{Eeqn}
  	 \eta' +\eta d =k(R_{11}[\rho-c_b] - R_{22}[\rho-c_b]),
    \end{equation}
    \begin{equation}\label{Veqn}
    	\omega' + \omega d = k(R_{21}[\rho-c_b]-R_{12}[\rho-c_b])=0,
    \end{equation}
    \begin{equation}\label{Xeqn}
     \xi' + \xi d = k (R_{12}[\rho-c_b]+R_{21}[\rho-c_b]),
    \end{equation}
     \begin{equation}\label{Reqn}
    \rho' + \rho d=0.
        \end{equation}
        \end{subequations}
Here, one can explicitly  calculate $R[\cdot]$, (see \cite{Y16} for detailed calculations) i.e.,
\begin{equation}
(R_{i j}  [h])(x)  = p.v.\int_{\mathbb{R}^2} \frac{\partial^2}{\partial y_j \partial y_i}G(y)h(x - y) \, d y +  \frac{h(x)}{2\pi}\int_{|z| =1}   z_i z_j  \, dz,
\end{equation}
where $G(y)$ is the Poisson kernel in two-dimensions, and is given by
$$G(y) = \frac{1}{2\pi} \log |y|.$$
Due to the singular nature of the integral, we are lack of $L^{\infty}$ estimate of the $R_{ij}[\cdot]$.
        
From \eqref{Veqn} and \eqref{Reqn}, we derive
\begin{equation}\label{ome_rho}
\frac{\omega}{\omega_0}=\frac{\rho}{\rho_0}.
\end{equation}
One can also rewrite $\eta$ and $\xi$ in terms of $\rho$, by explicitly solving \eqref{Eeqn} and \eqref{Xeqn} (see \cite{Y16} ), we obtain
\begin{equation}\label{ex_rho}
\eta(t) =  \bigg{(} \frac{\eta_0}{\rho_0}  + \int^t _0 \frac{f_1(\tau)}{\rho(\tau)} \, d\tau \bigg{)}\rho(t) \ and \ \xi(t) =   \bigg{(} \frac{\xi_0}{\rho_0}  + \int^t _0 \frac{f_2(\tau)}{\rho(\tau)} \, d\tau \bigg{)}\rho(t),
\end{equation}
where
\begin{equation}\label{f_1}
f_1(t):=k(R_{11}[\rho-c_b] - R_{22}[\rho-c_b]) = \frac{k}{\pi} p.v.\int_{\mathbb{R}^2} \frac{-y^2 _1 + y^2 _2}{(y^2 _1 + y^2 _2)^2} \rho(t, x(t) - y ) \, dy,
\end{equation}
and
\begin{equation}\label{f_2}
f_2(t):=k (R_{12}[\rho-c_b]+R_{21}[\rho-c_b]) = \frac{k}{\pi} p.v.\int_{\mathbb{R}^2} \frac{-2y_1 y_2}{(y^2 _1 + y^2 _2)^2} \rho(t, x(t) - y ) \, dy.
\end{equation}
Here, all functions of consideration are evaluated along the characteristic, that is, for example,
$f_i (t)=f_i(t, x(t))$ and $\eta(t)=\eta(t, x(t))$, etc.

Using \eqref{ome_rho} and \eqref{ex_rho} we can rewrite \eqref{Deqn} in a manner that all non-localities are absorbed in the coefficient of $\rho^2$. That is, together with \eqref{Reqn}, we obtain closed system
\begin{equation}\label{ode1_intro}
\left\{
  \begin{array}{ll}
    d' = -\frac{1}{2}d^2 + A(t)\rho^2 + k(\rho -c_b), \\
    \rho' = -\rho d,\\
  \end{array}
\right.
\end{equation}
where
\begin{equation}\label{A_eqn}
A(t):=\frac{1}{2} \bigg{[} \bigg{(} \frac{\omega_0}{\rho_0} \bigg{)}^2 - \bigg{(} \frac{n_0}{\rho_0} + \int^t _0 \frac{f_1(\tau)}{\rho(\tau)} \, d \tau \bigg{)}^2 - \bigg{(} \frac{\xi_0}{\rho_0} + \int^t _0 \frac{f_2(\tau)}{\rho(\tau)} \, d \tau \bigg{)}^2  \bigg{]}.
\end{equation}

In this work, we are concern with \eqref{ode1_intro}, subject to initial data
$$(\nabla \mathbf{u}, \rho)(0,\cdot)=(\nabla \mathbf{u}_0, \rho_0).$$

To put our study in a proper perspective we recall several recent works in the form of \eqref{ode1_intro}. It turns out that many of so-called restricted/modified can be reinterpreted within the scope of \eqref{ode1_intro}.

$\bullet$ Chae and Tadmor \cite{CT08}  proved the finite time blow-up for solutions of $k<0$ case, assuming \emph{vanishing initial vorticity}. Indeed, setting $\omega_0 =0$ in \eqref{A_eqn}  gives $A(t) \leq 0$, and this allows to derive 
$$d' \leq -\frac{1}{2}d^2 +  k(\rho -c_b).$$
Using this ordinary differential inequality, upper-threshold for finite time blow-up of solution was identified. Later Cheng and Tadmor \cite{CT09} improved the result of \cite{CT08} using the delicate ODE phase plane argument.

$\bullet$  Liu and Tadmor \cite{LT02, LT03} introduced the restricted Euler-Poisson (REP) system which is obtained from \eqref{Meqn} by restricting attention to the local isotropic trace $\frac{k}{2} (\rho -c_b) I_{2 \times 2}$ of the global coupling term $kR[\rho- c_b]$. One can also obtain the REP by letting $f_i \equiv 0$, $i=1,2$ in \eqref{A_eqn}. That is,
$$d' = -\frac{1}{2}d^2 + \frac{\beta}{2}\rho^2 + k(\rho -c_b), \ \ \beta=\frac{\omega^2 _0 - \eta^2 _0 - \xi^2 _0}{\rho^2 _0}.$$
The dynamics of $(\rho, d)$ of this ``localized" EP system was studied, and it was shown that in the repulsive case, the REP system admits so called critical threshold phenomena.

$\bullet$ Slight generalization of the REP was introduced in \cite{Y17}. This ``weakly" restricted EP can also be obtained by letting $f_1\equiv 0$ \emph{only} in \eqref{A_eqn}. Indeed, $f_1\equiv 0$ implies $A(t) \leq \frac{1}{2}[(\frac{\omega_0}{\rho_0})^2 - (\frac{\eta_0}{\rho_0})^2]$ and
$$d' \leq -\frac{1}{2}d^2 + \frac{\alpha}{2}\rho^2 + k(\rho -c_b), \ \ \alpha=\frac{\omega^2 _0 - \eta^2 _0 }{\rho^2 _0}.$$
Threshold conditions for finite time blow-up were identified for attractive and repulsive cases.

$\bullet$ While the dynamics of $d$ in the above reviewed models are governed by \emph{local} quantities, the model in \cite{Y16} strives to maintain some \emph{global} nature of $A(t)$. That is, the author assumed that
$$\bigg{|} \int^t _0 \frac{f_i(\tau)}{\rho(\tau)} \, d \tau \bigg{|} \leq C \int^t _0 \frac{1}{\rho(\tau)} \, d \tau, \ \ i=1,2$$
for some constant $C$, and obtained upper-thresholds for finite time blow-up for attractive and repulsive cases.

$\bullet$  Tan \cite{Tan14} assumed that
$$\bigg{|}\frac{f_i (t)}{\rho(t)}  \bigg{|} \leq C, \ \ i=1,2$$
and proved a global existence of solution for  repulsive case using some scaling argument.

$$$$

\section{Highlights of the paper and main results }
We first address some motivation of this work.  The difficulty lies with the nonlocal/singular nature of the Riesz transform, which fails to map $L^{\infty}$ data to $L^{\infty}$. Thus, main obstacle in handling the dynamics of $d$ in \eqref{ode1_intro} is the lack of an accurate description for the propagations of $f_i (t,x(t))$ in \eqref{f_1} and \eqref{f_2}. This, in turn, makes difficult to answer the questions of global regularity versus finite-time breakdown of solutions for \eqref{ode1_intro}.

From \eqref{A_eqn}, we know the initial value, and the uniform \emph{upper bound} of $A(t)$. That is,

$$A(0)=\frac{1}{2} \bigg{[} \bigg{(} \frac{\omega_0}{\rho_0} \bigg{)}^2 - \bigg{(} \frac{n_0}{\rho_0}  \bigg{)}^2 - \bigg{(} \frac{\xi_0}{\rho_0}  \bigg{)}^2  \bigg{]}$$
and
\begin{equation}\label{A_upper}
A(t) \leq \frac{1}{2} \bigg{(} \frac{\omega_0}{\rho_0} \bigg{)}, \ for \ all \ t \geq 0,
\end{equation}
as long as $A(t)$ exists.

However, we do not know if there exists any lower bound of $A(t)$. It is possible that $A(t) \rightarrow -\infty$ in finite/infinite time or remains uniformly bounded below for all time. This is because, as mentioned earlier, there is no $L^\infty$ bound of $f_i$. For each fixed $t$, we know that  $f_i (t,\cdot) \in \mathrm{BMO}(\mathbb{R}^2)$ (bounded mean oscillation, see e.g. \cite{S93}), and this implies that
$$f_i (t,\cdot) \in L^p _{loc} (\mathbb{R}^2), \ \ 1\leq p < \infty.$$ 

Since $A(t)$ is bounded above, we are left with only two possible cases
 under non-vacuum condition $\rho_0 >0$ (thus $\rho(t) >0$ from the second equation of \eqref{ode1_intro}):
 
\textbf{Case I:} Finite time blow-up of $A(t)$. That is, $$\lim_{t \rightarrow t^* -} A(t) = -\infty,$$ where $t^* < \infty$. This corresponds to 
$$\lim_{t \rightarrow t^* -}\bigg{|} \int^t _0 f_i (\tau, x(\tau)) \, d\tau \bigg{|} = \infty,$$
and this is possible because for each $t$, $f_i (t, \cdot)$ need not be locally bounded, so  $f_i(t, x(t))$ can be unbounded along some part of the characteristic.  In this case, we can easily see that  $$\lim_{t \rightarrow t^* -} d(t) = -\infty$$ as well, and $\rho, d$ blow-up in finite time. Indeed, suppose not, then since $t^* <\infty$, 
$$\rho(t^*) = \rho_0 e^{-\int^{t^*} _0 d(\tau, x(\tau)) \, d \tau} >\epsilon,$$
for some $\epsilon>0$. Applying this and $\lim_{t \rightarrow t^* -} A(t) = -\infty$ to \eqref{ode1_intro}, we obtain $\lim_{t \rightarrow t^* -} d' (t) = -\infty$ and this is contradiction.

\textbf{Case II:} $A(t)$ uniformly bounded below, or blows-up at infinity. That is, there exists some function  $h(t)$ such that
$A(t) \geq h(t)$, for all $t\geq 0$ and 
$$h(t) \rightarrow -\infty \  as \  t \rightarrow \infty.$$ 

The main contribution of this work is investigating \eqref{ode1_intro} under the condition in Case II, and we show that the Riccati structure  can afford to have global solutions even if $A(t) \rightarrow -\infty$, depending on $A(t)$'s rate of decreasing. More precisely, we show that the nonlinear-nonlocal system \eqref{ode1_intro} admits global smooth solutions for a large set of initial configurations provided that
\begin{equation}\label{exp_condi_1}
A(t) \geq -\alpha_1 e^{\beta_1 t}, \ for \ all \ t,
\end{equation}
where $\alpha_1$ and $\beta_1$ are some positive constants.

We can also show a similar result under the condition that
\begin{equation}\label{poly_condi_1}
A(t) \geq -(\alpha_2 t + \beta_2)^s, \ for \ all \ t,
\end{equation}
where $\alpha_2, \beta_2 >0$ and $s\geq 1$.  Of course the condition in \eqref{exp_condi_1} may imply the one in \eqref{poly_condi_1} depending on $\alpha_i$ and $\beta_i$. But it is worth to observe the difference between two initial configurations that lead to the global existence of the system under  two different conditions \eqref{exp_condi_1} and  \eqref{poly_condi_1}. Furthermore, there is a slight difference in our proofs when handling  \eqref{exp_condi_1} and \eqref{poly_condi_1}. So we consider both cases for the completeness.


From now on, in \eqref{ode1_intro}, we assume that $k=-1$ and $c_b=1$, because these constants are not essential in our analysis. Also, we set $\alpha_i=\beta_i=1$, $i=1,2$. We shall consider 
\begin{equation}\label{ode2_intro}
\left\{
  \begin{array}{ll}
    d' = -\frac{1}{2}d^2 + A(t)\rho^2 - (\rho -1), \\
    \rho' = -\rho d,\\
  \end{array}
\right.
\end{equation}
subject to initial data $(\nabla \mathbf{u}, \rho)(0,\cdot)=(\nabla \mathbf{u}_0, \rho_0),$
where
\begin{equation}\label{A_eqn_2}
A(t):=\frac{1}{2} \bigg{[} \bigg{(} \frac{\omega_0}{\rho_0} \bigg{)}^2 - \bigg{(} \frac{n_0}{\rho_0} + \int^t _0 \frac{f_1(\tau)}{\rho(\tau)} \, d \tau \bigg{)}^2 - \bigg{(} \frac{\xi_0}{\rho_0} + \int^t _0 \frac{f_2(\tau)}{\rho(\tau)} \, d \tau \bigg{)}^2  \bigg{]},
\end{equation}
with
\begin{equation}\label{poly_condi_2}
A(t) \geq -( t + 1)^s, \ for \ all \ t, 
\end{equation}
or
\begin{equation}\label{exp_condi_2}
A(t) \geq - e^{ t}, \ for \ all \ t.
\end{equation}

Here, we note that either \eqref{poly_condi_2} or  \eqref{exp_condi_2} already assumes that $A(0)\geq-1$, that is,
$$\frac{1}{2} \bigg{[} \bigg{(} \frac{\omega_0}{\rho_0} \bigg{)}^2 - \bigg{(} \frac{n_0}{\rho_0}  \bigg{)}^2 - \bigg{(} \frac{\xi_0}{\rho_0}  \bigg{)}^2  \bigg{]} \geq-1.$$
But this does not restrict our result, since one can always find $\alpha_i$ and $\beta_i$ that satisfy \eqref{exp_condi_1} and  \eqref{poly_condi_1} for any $A(0)$.

To present our results, we write \eqref{Meqn_pre} or \eqref{Meqn} here again, with the second equation of \eqref{ode2_intro}, to establish the two-dimensional Euler-Poisson system:
\begin{equation}\label{EP_system}
\left\{
  \begin{array}{ll}
    \mathcal{M}' + \mathcal{M}^2 =-R[\rho -1], \\
    \rho' = -\rho\cdot \mathrm{tr}(\mathcal{M}),\\
  \end{array}
\right.
\end{equation}
subject to initial data $(\mathcal{M},\rho)(0,\cdot)=(\mathcal{M}_0 , \rho_0)$.
We note that the global regularity follows from the standard boot-strap argument, once an \emph{a priori} estimate on $\| \mathcal{M}(\cdot, \cdot) \|_{L^{\infty}}$ is obtained. Also, under \eqref{poly_condi_2} or \eqref{exp_condi_2}, $\mathcal{M}=\nabla \mathbf{u}$ is completely controlled by $d=\nabla \cdot \mathbf{u}$ and $\rho$:
$$\|\mathcal{M}(t,\cdot) \|_{L^{\infty}[0,T]} \leq C_T \cdot \| (\mathrm{tr}\mathcal{M} , \rho) \|_{L^{\infty} [0,T]}.$$

Our goal of this work is to prove the following results.

\begin{theorem}\label{thm_poly}
Consider the Euler-Poisson system, \eqref{EP_system}  with \eqref{poly_condi_2}. 
If $$(\rho_0 , d_0) \in \Omega_{ s}:= \{(\rho ,d) \in \mathbb{R}^2  |  \rho>0 , d >0, \ and \  d > m^* \rho - n^*      \},$$
 then the solution of the Euler-Poisson system remains smooth for all time.
 Here, $m^* = m_1 m^M _2$ and $n^* = n_1 n^N _2$ are constants satisfying
\begin{equation}\label{condi_parta}
  0< N < 1, \ M>N+s, \ n_1 > 1+\sqrt{3}, \ n_2 >1, \ m_1 > \sqrt{2},
  \end{equation}
and
$$m_2 > \max \bigg{[}n_2, \bigg{(} \frac{m_1 n_1 + M m^2 _1 n_1 + n^2 _1}{m^2 _1} \bigg{)}^{1/(1-N)},    \bigg{(} \frac{n_1}{2m_1}(1+\sqrt{5}) \bigg{)}^{1/(M-N-s)} \bigg{]}.$$ 
\end{theorem}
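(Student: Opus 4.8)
The plan is to reduce the PDE question to an a priori bound on the scalar pair $(\rho,d)$ solving \eqref{ode2_intro}, and then to obtain that bound by constructing an explicit invariant region in the $(\rho,d)$-plane that is preserved by the flow as long as the lower bound \eqref{poly_condi_2} on $A(t)$ holds. As noted after \eqref{EP_system}, it suffices to prevent $d=\nabla\cdot\mathbf{u}$ and $\rho$ from blowing up, since $\mathcal M$ is then controlled by $(\mathrm{tr}\,\mathcal M,\rho)$ and the global regularity follows from bootstrap. So the whole argument lives in the two-dimensional ODE \eqref{ode2_intro}. The danger is purely that $d\to-\infty$ (and hence, via $\rho'=-\rho d$, that $\rho\to+\infty$) in finite time, since $A(t)\rho^2$ is a large negative forcing in the $d'$ equation; the positive-divergence, positive-density regime $\{\rho>0,\ d>0\}$ is where one hopes the $-\tfrac12 d^2$ term and the expansion $\rho'=-\rho d<0$ cooperate to keep trajectories from ever turning around.

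First I would set up the comparison: replace $A(t)$ by its worst admissible value $-(t+1)^s$, so that along a characteristic
\begin{equation}\label{pf-ineq}
d' \ \geq\ -\tfrac12 d^2 \ -\ (t+1)^s \rho^2 \ -\ (\rho-1),
\qquad \rho' = -\rho d.
\end{equation}
Because we start in $\Omega_s\subset\{\rho>0,d>0\}$, initially $\rho'<0$, so $\rho$ is decreasing; the decay of $\rho$ is what must beat the growth of $(t+1)^s$. Introduce the time-substitution / integrating factor suggested by $\rho'=-\rho d$, namely $\rho(t)=\rho_0\exp(-\int_0^t d)$, and track the auxiliary quantity measuring the distance of the trajectory from the barrier line $d=m^*\rho-n^*$. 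The constants $m^*=m_1 m_2^M$ and $n^*=n_1 n_2^N$ are engineered so that, on the curve $d=m^*\rho-n^*$ inside the first quadrant, the vector field of \eqref{pf-ineq} points into the region $\{d>m^*\rho-n^*\}$: one computes $\frac{d}{dt}\big(d-m^*\rho+n^*\big)$ on that curve and wants it $\geq 0$. Substituting $d=m^*\rho-n^*$ and $\rho'=-\rho d$ gives an expression polynomial in $\rho$ with time-dependent coefficient $(t+1)^s$, and the inequalities \eqref{condi_parta} together with the lower bound on $m_2$ are exactly the conditions making the relevant discriminant-type quantities have the right sign for every $t\geq 0$ and every $\rho$ in the reachable range. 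I would check the three "$\max$" branches for $m_2$ correspond respectively to: keeping $m_2>n_2$ (so the barrier sits correctly relative to the sub-/super-level structure), dominating the $\rho$-quadratic coming from $-\tfrac12 d^2 + \cdots - (\rho-1)$ at $t=0$ (the $1/(1-N)$ exponent), and absorbing the growing coefficient $(t+1)^s$ against the decay of $\rho$ along the flow (the $1/(M-N-s)$ exponent, which is why one needs $M>N+s$).

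Once the region $\Omega_s$ is shown to be forward-invariant, it follows that $d(t)>0$ for all $t$, hence $\rho'=-\rho d<0$ and $\rho(t)\in(0,\rho_0]$ is bounded; feeding $\rho\leq\rho_0$ back into \eqref{pf-ineq} and using invariance of $\{d>0\}$ one gets a two-sided bound on $d$ on every finite interval $[0,T]$ (an explicit $T$-dependent constant, matching the $C_T$ in the statement), and therefore on $\|\mathcal M(t,\cdot)\|_{L^\infty[0,T]}$; the bootstrap then yields global smoothness. The main obstacle, and the step that needs the most care, is the invariance computation on the barrier curve: one must verify the sign condition uniformly in $t$ despite the unbounded coefficient $(t+1)^s$, and this is precisely where the somewhat mysterious hierarchy $0<N<1<M-N-s$, $n_1>1+\sqrt3$, $m_1>\sqrt2$, and the three-way $\max$ for $m_2$ get used — I would expect that the cleanest route is to split $\Omega_s$ (or its boundary) into the portion with $\rho$ of order $\geq 1$ and the portion with small $\rho$, handling the $(t+1)^s\rho^2$ term differently in each, and to carry the comparison not directly but through the 3D auxiliary system alluded to in the abstract, whose invariant space projects down onto the claimed region $\Omega_s$.
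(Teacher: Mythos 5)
Your central mechanism — proving forward invariance of the \emph{fixed} region $\Omega_s$ by showing the vector field points inward on the \emph{fixed} barrier line $d=m^*\rho-n^*$ — does not work, and this is not a detail that can be patched by splitting $\Omega_s$ into large- and small-$\rho$ pieces. Compute what you propose: on $d=m^*\rho-n^*$ with the worst-case coefficient $A(t)=-(t+1)^s$, one gets
\begin{equation*}
\frac{d}{dt}\big(d-m^*\rho+n^*\big)\Big|_{d=m^*\rho-n^*}
= \Big(\tfrac12 (m^*)^2 - (t+1)^s\Big)\rho^2 - \rho + 1 - \tfrac12 (n^*)^2 .
\end{equation*}
Evaluating near the corner $\rho=n^*/m^*$, $d=0$ gives $1 - n^*/m^* - (t+1)^s (n^*/m^*)^2 \to -\infty$ as $t\to\infty$, so the sign condition fails at some boundary points for large $t$ no matter how the constants in \eqref{condi_parta} are chosen. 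The reason the theorem is nonetheless true is that the trajectory never revisits those boundary points: by then $\rho$ has decayed, and what is actually invariant is a \emph{time-dependent, shrinking} region. This is precisely why the paper lifts the problem to the three-dimensional auxiliary system \eqref{3by3_system}, introduces the moving surface $F(a,b,B)=b-m(B-1)a+n(B-1)=0$ with $m(x)=m_1(x+m_2)^M$ and $n(x)=n_1(x+n_2)^N$, and proves invariance of the 3D region (Lemmas \ref{lemma_onsurface}, \ref{lemma_ontheaxis}, \ref{lemma_invariant}). The constants $m^*=m(0)$ and $n^*=n(0)$ appearing in $\Omega_s$ are only the $t=0$ cross-section of that shrinking region, not an invariant set of the flow in their own right.

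You gesture at the right fix in your last sentence (``carry the comparison \ldots through the 3D auxiliary system''), but that is the entire content of the proof, not a cleanup step, and your proposal omits it: the construction of $m(\cdot),n(\cdot)$ and the verification of $\langle\dot a,\dot b,\dot B\rangle\cdot\nabla F>0$ on the moving surface; the check that the $a$-intercept $n/m$ stays below the turning point $a^*(B-1)$ of the $\dot b$-nullcline (Lemma \ref{lemma_ontheaxis}, which is where the exponent $M-N-s>0$ and the third branch of the $\max$ for $m_2$ come from); and the monotone comparison lemma (Lemma \ref{lemma_comp}) relating $(\rho,d)$ governed by the unknown $A(t)$ to the auxiliary $(a,b)$ governed by the explicit $-(t+1)^s$. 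Without those three ingredients the proof has a genuine gap. One smaller imprecision: your reading of the branch $m_2>\big(\tfrac{m_1 n_1+Mm_1^2 n_1+n_1^2}{m_1^2}\big)^{1/(1-N)}$ as a ``$t=0$'' condition is not right as stated; it is the condition of Lemma \ref{lemma_onsurface}, which must hold for all $x\ge 0$ on the moving surface (it merely happens to be tightest at $x=0$ because $(x+m_2)^{1-N}$ is increasing).
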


\begin{theorem}\label{thm_exp}
Consider the Euler-Poisson system,  \eqref{EP_system}  with \eqref{exp_condi_2}. 
If $$(\rho_0 , d_0) \in \Omega:= \{(\rho ,d) \in \mathbb{R}^2  |  \rho>0 , d >0, \ and \  d > m^* \rho - n^*      \},$$
 then the solution of the Euler-Poisson system remains smooth for all time.
 Here, $m^* = m_1 m^M _2$ and $n^* = n_1 n^N _2$ are constants satisfying
\begin{equation}\label{condi_partb}
  N < 0, \ M>m_2 \sqrt{2}, \ m_1 > \sqrt{2}, \ m_2 > n_2, \ n_1 > 0,
  \end{equation}
and
$$n_2 > \max \bigg{[} 1, \bigg{(}  \frac{n_1}{2m_1}(1+\sqrt{5})  \bigg{)}^{1/(M-N-1)} \bigg{]}.$$ 
\end{theorem}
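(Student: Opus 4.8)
The plan is to reduce the global regularity of \eqref{EP_system} to the study of the scalar ODE system \eqref{ode2_intro}, exactly as set up in Section 3: once we bound $\|(\mathrm{tr}\mathcal{M},\rho)\|_{L^\infty[0,T]}$ for every finite $T$, the stated inequality $\|\mathcal{M}(t,\cdot)\|_{L^\infty}\le C_T\|(\mathrm{tr}\mathcal{M},\rho)\|_{L^\infty}$ together with a bootstrap argument closes the problem. So the real work is to show that along each characteristic, if $(\rho_0,d_0)\in\Omega$ then $(\rho(t),d(t))$ stays bounded for all $t$, under the lower bound $A(t)\ge -e^{t}$. Because we only know $A(t)$ from above (by \eqref{A_upper}) and from below (by \eqref{exp_condi_2}), I would not try to solve \eqref{ode2_intro} directly; instead I would trap its trajectory between solutions of autonomous/comparison systems. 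The abstract already announces the strategy: construct an auxiliary $3$D system (variables roughly $(\rho,d$, and a third variable absorbing the time-dependence of the lower bound, e.g. $q=e^{t/2}$ or $q=(t+1)^{s/2}$), find an invariant region for it, and compare.

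Concretely, first I would replace $A(t)\rho^2$ in the $d$-equation by its worst case $-e^{t}\rho^2$, giving the differential inequality
\[
d' \le -\tfrac12 d^2 - e^{t}\rho^2 - (\rho-1),\qquad \rho'=-\rho d .
\]
Since the troublesome term is now $-e^t\rho^2$, introduce $q(t):=e^{t/2}$, so $q'=\tfrac12 q$, $q(0)=1$, and rewrite the bound as a comparison with the autonomous-in-form $3$D system
\[
\tilde d' = -\tfrac12\tilde d^2 - (q\tilde\rho)^2 - (\tilde\rho-1),\quad \tilde\rho'=-\tilde\rho\,\tilde d,\quad q'=\tfrac12 q,
\]
with $(\tilde\rho,\tilde d,q)(0)=(\rho_0,d_0,1)$ chosen so that $\tilde d\le d$, $\tilde\rho$ compares appropriately, by a standard ODE comparison lemma. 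The key is to exhibit a region $\Sigma\subset\{\rho>0,d>0\}$, parametrized by $q$, of the form $d> m_1 q^{M}\rho - n_1 q^{N}$ (this is where the exponents $M,N$ and constants $m_1,n_1,m_2,n_2$ in \eqref{condi_partb} come from, with $q$ eventually playing the role of $m_2$-type and $n_2$-type scaling parameters), which is forward-invariant: I must check that on the boundary $d=m_1 q^M\rho-n_1 q^N$ the vector field points inward, i.e. that
\[
\frac{d}{dt}\Big(d - m_1 q^{M}\rho + n_1 q^{N}\Big)\ \ge\ 0
\]
whenever equality holds, using $q'=q/2$, $\rho'=-\rho d$, and the $d$-inequality. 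Expanding this produces a polynomial inequality in $\rho$ (with $q$-dependent coefficients) whose nonnegativity is guaranteed precisely by the constraints $M>m_2\sqrt2$, $m_1>\sqrt2$, $m_2>n_2$, $n_2>\max[\,1,(\tfrac{n_1}{2m_1}(1+\sqrt5))^{1/(M-N-1)}\,]$; the discriminant-type conditions (the $1+\sqrt3$, $1+\sqrt5$ factors) are exactly what make a certain quadratic in $\rho$ sign-definite. Invariance of $\Sigma$ gives, in particular, $d(t)>0$ (no finite-time blow-up of $d$ toward $-\infty$, which is the only blow-up mechanism per Case I), and combined with $d$ staying below a $q$-controlled upper barrier — obtained similarly, using the upper bound \eqref{A_upper} so that $d'\le -\tfrac12 d^2+\tfrac12(\omega_0/\rho_0)^2\rho^2$ — one gets $\rho$ trapped in a bounded interval on each $[0,T]$, hence $d$ bounded too.

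The main obstacle I anticipate is the construction and verification of the invariant region for the auxiliary system: the region must be invariant uniformly in the growing parameter $q(t)\to\infty$, so the inward-pointing computation on the curved, $t$-dependent boundary must be arranged so that the good terms (coming from $\rho'=-\rho d<0$ and from $-\tfrac12 d^2$) dominate the destabilizing contributions of $q'=\tfrac12 q>0$ and of the $-(q\rho)^2$ forcing for \emph{all} $q\ge 1$ simultaneously. This is why the exponent hierarchy $N<0<?<M$ and the threshold on $n_2$ are needed — $N<0$ forces the $n_1 q^N$ term to help rather than hurt as $q$ grows, while $M$ large makes the slope term dominate. Once the region is shown invariant, translating back (from $(\tilde\rho,\tilde d)$ to $(\rho,d)$ via comparison, and from $\Sigma$ at $t=0$ to the stated set $\Omega=\{d>m^*\rho-n^*\}$ with $m^*=m_1 m_2^M$, $n^*=n_1 n_2^N$ by picking $q$-slices appropriately) is bookkeeping. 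For Theorem \ref{thm_poly} the only change is $q(t)=(t+1)^{s/2}$, $q'=\tfrac{s}{2}(t+1)^{s/2-1}=\tfrac{s}{2}q^{1-2/s}$, which has a different (sub-linear-in-$q$) growth rate for its derivative; this accounts for the different constraints in \eqref{condi_parta} (notably $0<N<1$ and $M>N+s$ replacing $N<0$, $M>m_2\sqrt2$), and is handled by the same invariant-region scheme with the adjusted boundary-derivative computation.
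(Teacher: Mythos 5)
Your proposal follows essentially the same route the paper takes: pass to a $3\times 3$ autonomous auxiliary system by adjoining the growing coefficient as a third state variable, prove that a half-space-type region whose boundary is a surface $d=m(\cdot)\rho-n(\cdot)$ with $m,n$ polynomial in the auxiliary variable is forward invariant, and then transfer the bounds to \eqref{ode2_intro} via a two-sided comparison lemma together with the upper bound \eqref{A_upper}. The one point worth correcting is that for the exponential case the paper does \emph{not} argue, as you suggest, by locating a nonnegative root of the relevant quadratic in $a$ and pushing it to the left of the $a$-intercept $n/m$ (that is the mechanism for Theorem~\ref{thm_poly}, and the $1+\sqrt{3}$ factor belongs there, not here); instead, with $N<0$ the quadratic in $a$ would fail the root-comparison for large $x$, so Lemma~\ref{lemma_onsurface_exp} forces the discriminant $D$ to be strictly negative for all $x\geq 0$ — this is precisely where $N<0$, $m_1>\sqrt{2}$ and $M>m_2\sqrt{2}$ enter — whence the quadratic is positive everywhere because its leading coefficient $\tfrac12 m^2-B$ is positive; the $1+\sqrt{5}$ condition on $n_2$ comes from the separate $a^*$ comparison in Lemma~\ref{lemma_ontheaxis_2}. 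Your $q=e^{t/2}$ parametrization also does not by itself reproduce the shifted polynomials $m_1(x+m_2)^M$, $n_1(x+n_2)^N$ — those shifts are what make the initial set $\Omega$ carry the constants $m^*=m_1m_2^M$, $n^*=n_1n_2^N$ and what guarantee the pointwise inequalities at $x=0$ — but this is a bookkeeping matter, not a structural one.
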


\textbf{Remarks.} Some remarks are in order at this point.

(1) The inequalities in theorems are explicit, so one can easily see that $\Omega_s$ and $\Omega$ are non-empty sets. Also, in Theorem \ref{thm_exp}, we note that $M-N-1$ is positive. This is because $m_2 > n_2 >1$ so that $M>\sqrt{2}$.

(2) For simplicity, we set $\alpha_i = \beta_i =1$ in \eqref{exp_condi_1} and \eqref{poly_condi_1} for the theorems. Our method work equally well for other general constants $\alpha_i$ and $\beta_i$.

(3) We note that main condition in our theorems $d_0 > m^* \rho_0 - n^*$ resembles the one in \emph{one-dimensional} EP system   \cite{ELT01}. Indeed, the critical threshold in 1D EP system depends on the relative size of the initial velocity gradient and initial density. More precisely, it was shown that the system admits a global solution if and only if
$$u_{0x}(x) \geq \rho_0 (x)-1.$$

(4) As discussed in Case I,  \emph{finite time} blow-up of $|A(t)|$ leads to the blow-ups of $\rho$ and $d$ in finite time. $A(t)$, the coefficient of $\rho^2$,  has the uniform upper bounds in \eqref{A_upper}.  Thus, the main contribution of the theorems is that the Riccati structure \eqref{EP_system} affords to have  solutions  while $A(t)$ freely moves under conditions \eqref{exp_condi_1} or \eqref{poly_condi_1}.
In particular, this includes that the system admits global solutions even though $A(t)$ blows up at infinity, as long as the blow-up rate is not severe.

\begin{figure}[ht]\begin{center}
\includegraphics[width=70mm]{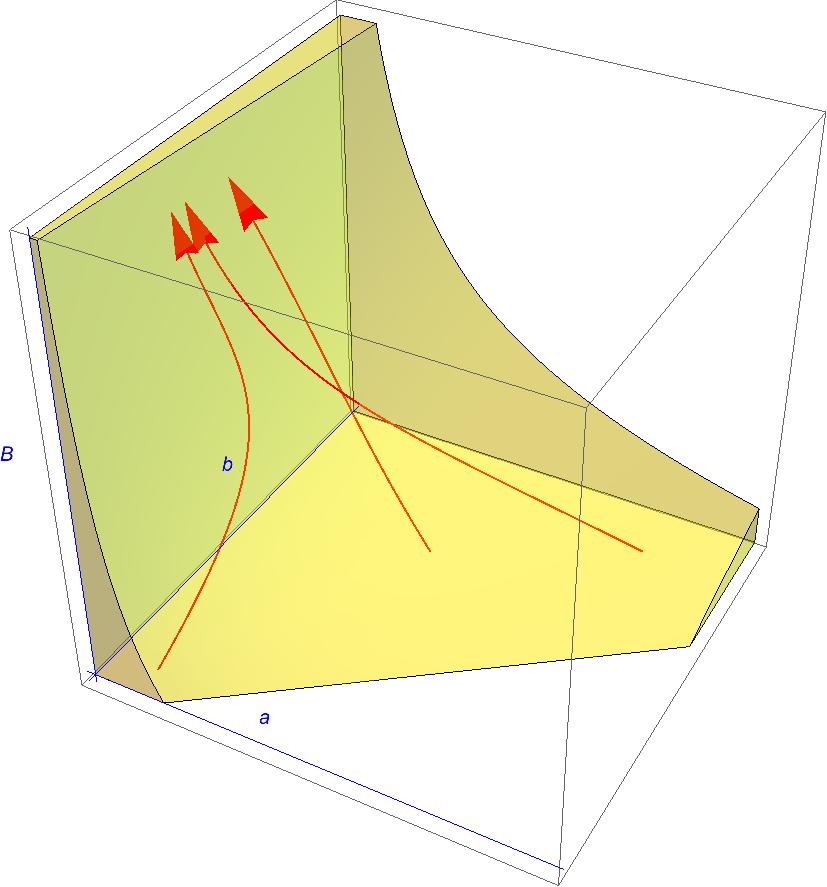}
\caption{3D  invariant space of the auxiliary system}\label{invariant}
\end{center}
\end{figure}

The next sections are devoted to the proofs of the theorems. In order to prove the theorems, we introduce an $3 \times 3$ auxiliary system
\begin{equation*}
\left\{
  \begin{array}{ll}
    b' = \tilde{H}(b,a,B), \\
    a' = -ba,\\
    B'=G(B),
  \end{array}
\right.
\end{equation*}
and find a three-dimensional invariant space of the system, where all trajectories if they start from inside this space will stay encompassed at all time, see Figure \ref{invariant}. Then, we compare the auxiliary system with \eqref{ode1_intro},
\begin{equation*}
\left\{
  \begin{array}{ll}
    d' = H(d,\rho,A(t)), \\
    \rho' = -\rho d.\\
  \end{array}
\right.
\end{equation*}
The key parts of the proofs are constructing the surface that determines the three-dimensional invariant space of the auxiliary system, and establishing monotonicity between the auxiliary system and the original system.




\section{Proof of Theorem \ref{thm_poly}}\label{section_poly}

We start this section by considering the following nonlinear ODE system with the time dependent coefficient,
\begin{equation}\label{ode1_poly}
\left\{
  \begin{array}{ll}
    \dot{b} = -b^2 /2 -(t+1)^s a^2 -a +1, \\
    \dot{a} = -ba.\\
  \end{array}
\right.
\end{equation}

Setting $B(t)=t+1$, one can rewrite the system as follows:
\begin{equation}\label{3by3_system}
\left\{
  \begin{array}{ll}
    \dot{b} = -\frac{1}{2}b^2 - B^s a^2 -a +1, \\
    \dot{a} = -ba\\
    \dot{B} =1
  \end{array}
\right.
\end{equation}
with $(a , b , B)\big{|}_{t=0} = (a_0 , b_0 , B_0 =1).$

We shall find set of initial data for which the solution of \eqref{3by3_system} exists for all time. Consider surface
$$b=m(B-1) a - n(B-1), \ B\geq1$$
in $(a,b, B)$ space where $m(\cdot)$ and $n(\cdot)$ are positive on $[0,\infty$) and continuously differentiable.
We find conditions on $m(\cdot)$ and $n(\cdot)$ such that trajectory $(a , b, B)$ stays on one side of 
\begin{equation}\label{f_surface}
F(a , b , B):=b - m(B-1) a + n(B-1)=0.
\end{equation}
In order to do that, it requries
\begin{equation}\label{dot_product}
\langle \dot{a}, \dot{b}, \dot{B} \rangle \cdot \nabla F >0,
\end{equation}
on the surface $F(a ,b , B)=0$, where
$$\nabla F = \langle -m(B-1), 1, -m'(B-1)a + n'(B-1) \rangle.$$ 

Upon expanding \eqref{dot_product} and substituting \eqref{f_surface}, the left hand side of \eqref{dot_product} can be written as
\begin{equation*}
\begin{split}
&\big{\langle} -ba,  -\frac{1}{2}b^2 - B^s a^2 -a +1 , 1 \big{\rangle} \cdot \langle -m(B-1), 1, -m'(B-1)a + n'(B-1) \rangle \\
&\Rightarrow ba m(B-1) -\frac{1}{2}b^2 - B^s a^2 -a +1 -m'(B-1)a + n'(B-1)\\
&\Rightarrow (ma -n) a m - \frac{1}{2}(ma -n)^2 - B^s a^2 - a +1 -m' a + n'\\
&\Rightarrow \bigg{(} \frac{1}{2}m^2 - B^s \bigg{)} a^2 - (1+m')a - \frac{1}{2}n^2 + n' +1. 
\end{split}
\end{equation*}
Here and below $m$ and $n$ are evaluated at $B-1$. Thus, on the surface $F(a ,b , B)=0$, \eqref{dot_product} is equivalent to
\begin{equation}\label{qd_1}
\bigg{(} \frac{1}{2}m^2 - B^s \bigg{)} a^2 - (1+m')a - \frac{1}{2}n^2 + n' +1 >0.
\end{equation}

We will find $m$ and $n$ such that the above inequality holds for some set of $(a, b ,B)$. The inequality is quadratic in $a$, and the nonnegative root of the quadratic equation is given by
$$R_* (B-1) : = \frac{(1+m') + \sqrt{(1+m')^2 + (m^2 - 2B^s)(n^2 -2n' -2)}}{m^2 - 2B^s},$$
provided that 
\begin{equation}\label{leading_disc}
m^2 - 2B^s >0 \ \  and \ \ D=(1+m')^2 + (m^2 - 2B^s)(n^2 -2n' -2) \geq 0.
\end{equation}
For each fixed $B$, $\frac{n}{m}$ is the $a$-intercept of  $F(a ,b , B)=0$, so 
\begin{equation}\label{root_ineq}
R_* (B-1)< \frac{n(B-1)}{m(B-1)}
\end{equation}
implies
\eqref{qd_1}, for all $a \geq \frac{n(B-1)}{m(B-1)}$. 

Expanding, completing square and simplifying \eqref{root_ineq} give
\begin{equation*}
\begin{split}
& m \sqrt{(1+m')^2 + (m^2 - 2B^s)(n^2 -2n' -2)} < m^2 n -2B^s n - m - m m'\\
&\Rightarrow 2(m^2 - 2B^s) \big{\{}  (1+n')m^2 - (1+m')mn - B^s n^2 \big{\}} > 0.
\end{split}
\end{equation*}
Since $m^2 > 2B^s$, it suffices to have
\begin{equation*}
(1+n')m^2 - (1+m')mn - B^s n^2 >0.
\end{equation*}
Note that $m$ and $n$ are evaluated at $B-1$, so let $$x=B-1,$$ and writing the above inequality in terms of $x$ gives,
\begin{equation}\label{qd_2}
(1+n'(x))m^2 (x) - (1+m' (x))m(x)n(x) - (x+1)^s n^2(x) >0.
\end{equation}

\begin{figure}[h]  
\centering 
  \begin{subfigure}[b]{0.45\linewidth}
\begin{tikzpicture}
\draw[ultra thick,->] (0,0) -- (6,0) node[anchor=north west] {a};
\draw[ultra thick,->] (0,0) -- (0,4) node[anchor=south east] {b};
\draw [thick, red] (3,0) -- (5.5,3.5)  node[align=left, left]
{$b=ma - n$};;
\draw [fill] (1.5+0.4,.0)  node[below]{$R_*$} circle [radius=.07];
\draw [fill] (3,.0)  node[below]{$\frac{n}{m}$} circle [radius=.07];
\draw [fill] (4.5,.0)  node[below]{$a^* $} circle [radius=.07];
  \fill [gray, opacity=0.2, domain=0:2, variable=\x]
      (0, 0.01)
      -- plot (0, 3.5)
      -- plot (5.5, 3.5)
      -- (3, 0.01)
      -- cycle;
\draw[thick, blue, ->] (1.5/2,0) -- (1.5/2,0.5);
\draw[thick, blue, ->] (1.5 + 1.5/2,0) -- (1.5 + 1.5/2,0.5);
\draw[thick, blue, ->] (1.5 + 1.5/2,0) -- (1.5 + 1.5/2,0.5);
\draw[thick, blue, ->] (3 + 1.5/2,0) -- (3 + 1.5/2,0.5);
\draw[thick, blue, ->] (4.5 + 1.5/2,0) -- (4.5 + 1.5/2,-0.5);
\draw[thick, blue, ->] (3.8, 3.5*3.8/2.5 - 3.5*3/2.5) --  (3.8 -0.6, 3.5*3.8/2.5 - 3.5*3/2.5 -0.4);
\draw[thick, blue, ->] (4.8, 3.5*4.8/2.5 - 3.5*3/2.5) --  (4.8 -0.6, 3.5*4.8/2.5 - 3.5*3/2.5 -0.4);
\end{tikzpicture}
    \caption{$B=1$ initially} \label{fig:M1}  
  \end{subfigure}
\begin{subfigure}[b]{0.45\linewidth}
\begin{tikzpicture}
\draw[ultra thick,->] (0,0) -- (6,0) node[anchor=north west] {a};
\draw[ultra thick,->] (0,0) -- (0,4) node[anchor=south east] {b};
\draw [thick, red] (3-1.5,0) -- (5.5-1.5-1.5,3.5)  node[align=left, right]
{$b=ma - n$};;
\draw [fill] (1.5+0.4-1.5,.0)  node[below]{$R_* $} circle [radius=.07];
\draw [fill] (3-1.5,.0)  node[below]{$\frac{n}{m}$} circle [radius=.07];
\draw [fill] (4.5-1.5,.0)  node[below]{$a^* $} circle [radius=.07];
  \fill [gray, opacity=0.2, domain=0:2, variable=\x]
      (0, 0.01)
      -- plot (0, 3.5)
      -- plot (5.5-1.5-1.5, 3.5)
      -- (3-1.5, 0.01)
      -- cycle;
\draw[thick, blue, ->] (1.5/2,0) -- (1.5/2,0.5);
\draw[thick, blue, ->] (1.5 + 1.5/2,0) -- (1.5 + 1.5/2,0.5);
\draw[thick, blue, ->] (1.5 + 1.5/2,0) -- (1.5 + 1.5/2,0.5);
\draw[thick, blue, ->] (3 + 1.5/2,0) -- (3 + 1.5/2,-0.5);
\draw[thick, blue, ->] (4.5 + 1.5/2,0) -- (4.5 + 1.5/2,-0.5);
\draw[thick, blue, ->]   (1.75 , 3.5*1.75 - 3.5*1.5 ) --     (1.75-0.5 , 3.5*1.75 - 3.5*1.5 -0.5 );
\draw[thick, blue, ->]   (2.3 , 3.5*2.3 - 3.5*1.5 ) --  (2.3-0.5 , 3.5*2.3 - 3.5*1.5 -0.5);
\end{tikzpicture}
\caption{As $B$ increases} \label{fig:M2}  
\end{subfigure}
\caption{Cross-section of the ``shrinking" invariant space}\label{fig1}
\end{figure}
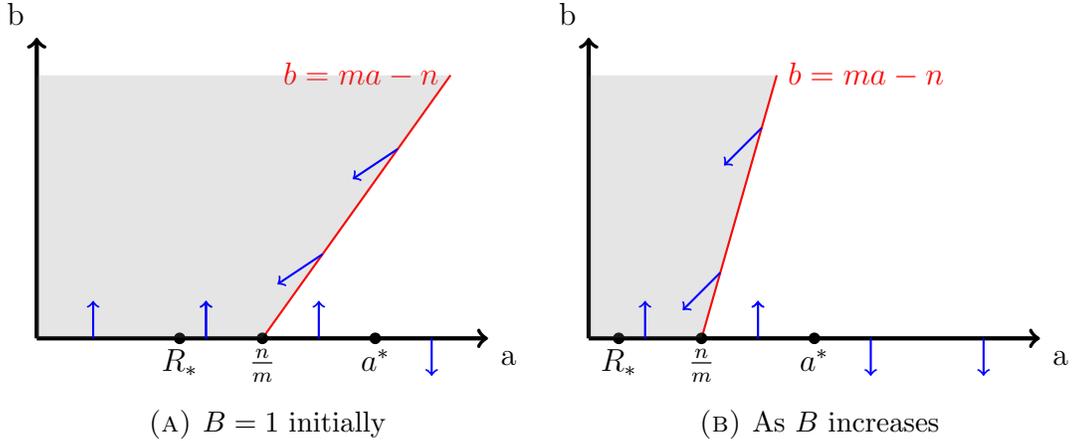

$$$$

\textbf{Construction of} $\mathbf{n(x)}$ \textbf{and} $\mathbf{m(x)}$. We prove the existences of $m(x)$ and $n(x)$. More precisely, we find simple polynomials 
$$n(x):=n_1 (x+n_2)^N, \ \  and \ \ m(x)=m_1 (x+m_2)^M$$
that satisfy \eqref{leading_disc} and  \eqref{qd_2} for all $x\geq 0$.  We want to emphasize the method and not the technicalities, so our construction here may not optimal, and one may obtain sharper functions $n(x)$ and $m(x)$ later.

First, we consider
$$m^2 (B-1) -2B^s = m^2 _1 (x+m_2)^{2M} -2(x+1)^s$$
in  \eqref{leading_disc}. From \eqref{condi_parta}, since $m_2 > n_2 >1$ and $M > N+s > s$,
$$m_1 > \sqrt{2}$$
implies positivity of $m^2 (B-1) -2B^s$ for all $B\geq 1$.

Next, we prove positivity of 
$$n^2 - 2n' -2 = n^2 _1 (x+n_2)^{2N} -2n_1 N (x+n_2)^{N-1} -2,$$
which in turn implies $D\geq 0$ in   \eqref{leading_disc}.
Since $0<N<1$, $x\geq 0$ and $n_2 >1$, we have $(x+n_2)^{2N} >1$ and $(x+n_2)^{N-1} < 1$. Thus, it suffices to show
$$n^2 _1 -2n_1 N -2 >0.$$
The left hand side is quadratic in $n_1$, so the inequality holds if
$$n_1 > N+\sqrt{N^2 +2}.$$
Since $N<1$,  $$n_1 > 1+ \sqrt{3}$$ gives the desired result.


Now, the following series of lemmata is useful to construct the ``shrinking'' invariant space in Figure \ref{fig1}.

\begin{lemma}\label{lemma_onsurface} 
Assume $1<n_2 < m_2$.
If $M>\frac{N+1 +s}{2}$, $0<N<1$ and 
\begin{equation}\label{lem_condi}
m^{1-N} _2> \frac{m_1 n_1 + M m^2 _1 n_1 + n^2 _1}{m^2 _1},
\end{equation}
then 
\begin{equation*}
(1+n'(x))m^2 (x) - (1+m' (x))m(x)n(x) - (x+1)^s n^2(x) >0,
\end{equation*}
for all $x\geq 0$. Thus \eqref{dot_product} holds.
\end{lemma}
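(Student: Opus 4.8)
The plan is to insert the explicit polynomials $n(x)=n_1(x+n_2)^N$ and $m(x)=m_1(x+m_2)^M$, replace the several ``bases'' $x+n_2$ and $x+1$ by the single base $x+m_2$ (which is legitimate since $1<n_2<m_2$ and $N>0$, $s\geq 1$), thereby reducing the inequality to a statement about pure powers of $y:=x+m_2$, and finally exploit $y\geq m_2>1$ to collapse that statement onto the single algebraic condition \eqref{lem_condi}.

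First I would record $n'(x)=Nn_1(x+n_2)^{N-1}\geq 0$ and $m'(x)=Mm_1(x+m_2)^{M-1}\geq 0$. Since $n'\geq 0$, the positive term satisfies $(1+n'(x))m^2(x)\geq m^2(x)=m_1^2(x+m_2)^{2M}$, so it suffices to bound the two negative terms from above. Using $x+n_2\leq x+m_2$ with $N>0$, and $x+1\leq x+m_2$ with $s\geq 1>0$, and writing $y:=x+m_2\geq m_2>1$, one obtains
\[
(1+m')mn\leq m_1n_1\,y^{M+N}+Mm_1^2n_1\,y^{2M-1+N},\qquad (x+1)^sn^2\leq n_1^2\,y^{s+2N},
\]
so the lemma follows once one establishes, for all $y\geq m_2$, the homogeneous inequality
\[
m_1^2\,y^{2M}>m_1n_1\,y^{M+N}+Mm_1^2n_1\,y^{2M-1+N}+n_1^2\,y^{s+2N}.
\]

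Next I would divide through by $y^{2M-1+N}>0$, which rewrites the last display as $m_1^2y^{1-N}>m_1n_1y^{1-M}+Mm_1^2n_1+n_1^2y^{s+N+1-2M}$. The gain is that the two nonconstant exponents on the right are negative: $M>\tfrac{N+1+s}{2}\geq 1$ (using $s\geq 1$ and $N>0$) forces $1-M<0$, and $M>\tfrac{N+1+s}{2}$ forces $s+N+1-2M<0$ directly, while $1-N>0$. Since $y\geq m_2>1$, this gives $y^{1-M}<1$, $y^{s+N+1-2M}<1$ and $y^{1-N}\geq m_2^{1-N}$, so it is enough to have $m_1^2m_2^{1-N}>m_1n_1+Mm_1^2n_1+n_1^2$, which is exactly \eqref{lem_condi} after dividing by $m_1^2$. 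This establishes the displayed inequality of the lemma, i.e.\ \eqref{qd_2}, for every $x\geq 0$; combined with the bounds $m_1>\sqrt{2}$ and $n_1>1+\sqrt{3}$ already verified before the statement (which secure \eqref{leading_disc}, hence $m^2-2B^s>0$ and $R_*$ well defined), the implications $\eqref{qd_2}\Rightarrow\eqref{root_ineq}\Rightarrow\eqref{qd_1}\Rightarrow\eqref{dot_product}$ on the surface $F=0$ yield the conclusion.

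The only delicate point is the exponent bookkeeping: one must check that, after the two monotonicity estimates, the right-hand side of the normalized inequality is a strictly decreasing function of $y$ on $[m_2,\infty)$, so that testing it at the endpoint $y=m_2$ suffices. This is precisely where the hypotheses $0<N<1$, $M>\frac{N+1+s}{2}$ and $s\geq 1$ enter, and it accounts for the shape of \eqref{lem_condi}, whose right-hand side $\frac{m_1n_1+Mm_1^2n_1+n_1^2}{m_1^2}$ is exactly the value of that decreasing function at $y=m_2$. Everything else is elementary algebra.
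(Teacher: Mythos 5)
Your proof is correct and follows essentially the same route as the paper: bound the positive term from below by $m_1^2(x+m_2)^{2M}$, replace all bases $(x+n_2)$ and $(x+1)$ by $(x+m_2)$, collapse the resulting single-base polynomial inequality, and use monotonicity in $x$ to reduce to the endpoint $x=0$, which is precisely \eqref{lem_condi}. The only cosmetic difference is that you divide by $y^{2M-1+N}$ before invoking that all remaining exponents on the right are nonpositive, whereas the paper first dominates all three right-hand terms by the single power $(x+m_2)^{2M+N-1}$ and then divides; both moves use the same hypotheses $M>1$ and $2M>N+s+1$ and lead to the same condition.
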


\begin{proof}
We show that
\begin{equation}
\underbrace{(1+n')m^2}_{LHS} > \underbrace{(1+m')mn + n^2 (x+1)^s}_{RHS}.
\end{equation}
Note that
$$LHS > m^2 = m^2 _1 (x+m_2)^{2M}.$$
Also,
\begin{equation*}
\begin{split}
RHS&=mn+m' mn + n^2 (x+1)^s\\
&=m_1 n_1 (x+n_2)^N (x+m_2)^M + Mm^2 _1 n_1  (x+m_2)^{2M-1}  (x+n_2)^N +n^2 _1 (x+n_2)^{2N} (x+1)^s\\
&< m_1 n_1 (x+m_2)^{M+N} + Mm^2 _1 n_1 (x+m_2)^{2M+N-1} + n^2 _1 (x+m_2)^{2N+s}\\
&<(m_1 n_1 + M m^2 _1 n_1 + n^2 _1)(x+m_2)^{2M+N-1}.
\end{split}
\end{equation*}
Here, the assumptions $1<n_2 < m_2$ and $M>\frac{N+1 +s}{2}$ (therefore $M>1$) are used to derive inequalities. 

Now, it suffice to prove
$$m^2 _1 (x+m_2)^{2M} >(m_1 n_1 + M m^2 _1 n_1 + n^2 _1)(x+m_2)^{2M+N-1},$$
which is equivalent to
$$\frac{m^2 _1}{m_1 n_1 + M m^2 _1 n_1 + n^2 _1} > \frac{1}{(x+m_2)^{1-N}}.$$
We see that the right hand side of the above inequality is decreasing in $x \in [0, \infty)$, so \eqref{lem_condi} gives the desired result.
\end{proof}

From Lemma \ref{lemma_onsurface}, we can see that any trajectory on $F(a,b,B)=0$ can't cross $F(a,b,B)=0$ from left to right. We continue to construct an invariant region for the system \eqref{3by3_system}. It is easy to see that trajectory  $(a, 0, B)$ on the $aB$ plane moves upward if  $a< a^* $ (downward if $a > a^*$) where
$$a^* (B-1)=\frac{1-\sqrt{1+4B^s}}{-2B^s}.$$
In order to secure the invariant region we need 
$$a^* (B-1) > \frac{n(B-1)}{m(B-1)},$$
for all $B \geq 1$. See Figure \ref{fig1}. This is fulfilled by the following lemma.

\begin{lemma}\label{lemma_ontheaxis}
If $M>N+s$ and, 
$$m_2 >  \bigg{\{} \frac{n_1}{2m_1}(1+\sqrt{5}) \bigg{\}}^{1/(M-N-s)},$$
then it holds,
$$a^* (x) > \frac{n(x)}{m(x)},$$
for all $x\geq 0$.
\end{lemma}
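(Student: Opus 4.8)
The plan is to reduce the claimed inequality $a^*(x)>n(x)/m(x)$ to the stated algebraic condition on $m_2$ by first putting $a^*$ in a shape amenable to lower bounds. Setting $B=x+1$ and rationalizing the numerator,
$$a^*(x)=\frac{\sqrt{1+4(x+1)^s}-1}{2(x+1)^s}=\frac{2}{1+\sqrt{1+4(x+1)^s}},$$
which makes the positivity of $a^*$ transparent and isolates the one quantity, $\sqrt{1+4(x+1)^s}$, that must be controlled from above.

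Next I would produce a clean power-type lower bound. Since $x\geq 0$ and $s\geq 1$ we have $t:=(x+1)^s\geq 1$, and the elementary inequality $(5t+1)(t-1)\geq 0$ gives $1+4t\leq 5t^2$; hence $\sqrt{1+4(x+1)^s}\leq\sqrt5\,(x+1)^s$ and, using $(x+1)^s\geq1$ once more, $1+\sqrt{1+4(x+1)^s}\leq(1+\sqrt5)(x+1)^s$. Therefore $a^*(x)\geq\dfrac{2}{(1+\sqrt5)(x+1)^s}$ for all $x\geq 0$.

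It then remains to compare this with $n(x)/m(x)=\dfrac{n_1(x+n_2)^N}{m_1(x+m_2)^M}$. Using the standing order $1<n_2<m_2$, for $x\geq 0$ one has $x+1\leq x+m_2$ and $x+n_2\leq x+m_2$, so (recalling $0<N<1$ and $s\geq 1$) $(x+1)^s\leq(x+m_2)^s$ and $(x+n_2)^N\leq(x+m_2)^N$. Thus it suffices to verify $\dfrac{2}{(1+\sqrt5)(x+m_2)^s}>\dfrac{n_1}{m_1}(x+m_2)^{N-M}$, i.e. $\dfrac{2m_1}{(1+\sqrt5)n_1}>(x+m_2)^{-(M-N-s)}$. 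Because $M-N-s>0$, the right-hand side is decreasing on $[0,\infty)$, so its supremum is $m_2^{-(M-N-s)}$, attained at $x=0$; the inequality then reads $m_2^{M-N-s}>\dfrac{(1+\sqrt5)n_1}{2m_1}$, which is exactly the hypothesis $m_2>\big\{\tfrac{n_1}{2m_1}(1+\sqrt5)\big\}^{1/(M-N-s)}$. Chaining the three inequalities yields $a^*(x)>n(x)/m(x)$ for every $x\geq 0$.

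There is no genuine obstacle here; the only point to watch is strictness, since the bound $a^*(x)\geq\frac{2}{(1+\sqrt5)(x+1)^s}$ is an equality at $x=0$. The strict inequality in the conclusion is therefore supplied entirely by the final step, where the hypothesis on $m_2$ is assumed strict, so the chain delivers a strict inequality uniformly in $x\geq0$.
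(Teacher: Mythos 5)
Your proof is correct and lands on exactly the same final condition, but it reorganizes the algebra in a way that differs from the paper's argument. The paper keeps $a^*(x)$ in the form $\frac{-1+\sqrt{1+4(x+1)^s}}{2(x+1)^s}$, performs the same power replacements $n_2\mapsto m_2$ and $(x+1)^s\mapsto (x+m_2)^s$ to reduce to
$-1+\sqrt{1+4(x+1)^s}>\frac{2n_1}{m_1(x+m_2)^{M-N-s}}$, then squares, notes that the left side of the resulting inequality is increasing in $x$ and the right side decreasing so that $x=0$ is the worst case, and finally reads off the threshold on $m_2^{M-N-s}$ from the positive root of the quadratic $m_1^2\,Y^2-m_1n_1\,Y-n_1^2$. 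You instead rationalize $a^*$ to $\frac{2}{1+\sqrt{1+4(x+1)^s}}$ and front-load the elementary bound $1+4t\leq 5t^2$ (valid for $t\geq 1$, tight at $t=1$) to obtain the power-type estimate $a^*(x)\geq\frac{2}{(1+\sqrt5)(x+1)^s}$, after which the remaining monotonicity argument involves a constant on one side and a single decreasing power on the other, so the reduction to $x=0$ is immediate and no quadratic formula appears. Both are valid; your version is a bit shorter, makes the origin of the constant $\frac{1+\sqrt5}{2}$ transparent from the start, and you correctly flag that the strictness of the conclusion at $x=0$ must be sourced from the strict hypothesis on $m_2$ since your upfront bound on $a^*$ is an equality there.
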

\begin{proof}
Since $a^* (x)= \frac{-1 + \sqrt{1 + 4(x+1)^s}}{2(x+1)^s}$,  we show that
$$\underbrace{\frac{-1 + \sqrt{1 + 4(x+1)^s}}{2(x+1)^s}}_{LHS} > \underbrace{ \frac{n_1 (x+n_2)^N}{m_1 (x+m_2)^M}}_{RHS}.$$
Since $1 < n_2 < m_2$, we have
$$RHS= \frac{n_1 (x+n_2)^N}{m_1 (x+m_2)^M} < \frac{n_1 (x+m_2)^N}{m_1 (x+m_2)^M}.$$
Thus, it suffice to prove
$$LHS=  \frac{-1 + \sqrt{1 + 4(x+1)^s}}{2(x+1)^s} > \frac{n_1}{m_1 (x+ m_2)^{M-N}},$$
or equivalently
$$-1 + \sqrt{1+ 4(x+1)^s} > \frac{2n_1(x+1)^s}{m_1(x+m_2)^{M-N}}.$$

Since $m_2 > 1$, the above inequality holds if
$$-1 + \sqrt{1+ 4(x+1)^s} >  \frac{2n_1 }{ m_1 (x+m _2)^{M-N-s}}. $$
Upon simplification, the inequality is reduced to
\begin{equation}
4(x+1)^s > \frac{4n^2 _1}{m^2 _1 (x+m_2)^{2(M-N-s)}} + \frac{4n_1}{m_1 (x+m_2)^{M-N-s}}.
\end{equation}
Since $M-N-s >0$, we note that the right hand side of the above inequality is decreasing in $x$ and the left hand side is increasing in $x$. Thus, it suffices to hold at $x=0$, that is,
$$4 > \frac{4n^2 _1}{m^2 _1}\cdot \frac{1}{ m^{2(M-N-s)} _2} + \frac{4n_1}{m_1}\cdot \frac{1}{m^{M-N-s} _2},$$
or equivalently 
$$m^2 _1 m^{2(M-N-s)} _2 - m_1 n_1 m^{M-N-s} _2 - n^2 _1 >0.$$
The left hand side is quadratic in $m^{M-N-s} _2$ and positive when $m^{M-N-s} _2$ is greater than the positive root of the quadratic equation. That is,
$$m^{M-N-s} _2 > \frac{m_1 n_1 + \sqrt{m^2 _1 n^2 _1 + 4 m^2 _1 n^2 _1}}{2m^2 _1}= \frac{n_1}{2m_1}(1+\sqrt{5}).$$
\end{proof}

We are done with constructing the ``shrinking"  invariant region, and it's property can be summarized as follows.
\begin{lemma}\label{lemma_invariant}
Consider \eqref{ode1_poly}. Let $\Omega:=\{(a ,b) \in \mathbb{R}^2  |  a>0 , b >0,  and \  b > m(0) a - n(0)      \}$.
If $(a_0 , b_0) \in \Omega$, then $0<b(t)$ and $a(t) \leq a_0$ for all $t\geq 0$.
\end{lemma}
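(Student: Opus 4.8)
The plan is to show that the region $\Omega$ is forward-invariant for the planar system \eqref{ode1_poly} (equivalently \eqref{3by3_system}), and then read off the two claimed bounds. I would set up $\Omega$ as bounded by three pieces of its boundary: the ray $\{a=0,\ b>0\}$, the ray $\{b=0,\ a>0\}$ (or more precisely the relevant segment of the $a$-axis near the origin), and the line $b=m(0)a-n(0)$, where $m(0)=m_1 m_2^M$ and $n(0)=n_1 n_2^N$ are the values at $x=B-1=0$. The key point is that on each boundary piece the vector field $(\dot a,\dot b,\dot B)=(-ba,\ -\tfrac12 b^2-B^s a^2-a+1,\ 1)$ points into $\Omega$ (weakly, where needed), so a trajectory starting inside cannot escape; once invariance is established, $a(t)>0$ and $b(t)>0$ hold for all $t$, and from $\dot a=-ba\le 0$ along such a trajectory we get $a(t)\le a_0$.

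The three boundary checks go as follows. First, on $\{a=0\}$ we have $\dot a=0$ and the $a=0$ plane is itself invariant (if $a$ ever vanishes it stays zero), so trajectories with $a_0>0$ stay in $\{a>0\}$; this is immediate from uniqueness for the $a$-equation $\dot a=-ba$. Second, on $\{b=0,\ a>0\}$ inside $\Omega$: here $\dot b=-B^s a^2-a+1$, which need not be positive for all such $a$, so I cannot conclude invariance from this face alone — this is exactly the role of Lemma \ref{lemma_ontheaxis}. That lemma guarantees $a^*(B-1)>n(B-1)/m(B-1)$ for all $B\ge 1$, i.e. the point $a^*$ where $\dot b$ changes sign on the $b=0$ axis lies to the right of the $a$-intercept $n/m$ of the line $F=0$; hence on the portion of the $b=0$ axis that actually bounds $\Omega$ (namely $a<n/m<a^*$) we have $\dot b>0$, so the field points upward into $\Omega$. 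Third, on the line $b=m(0)a-n(0)$, i.e. on the cross-section $F(a,b,1)=0$ of the surface \eqref{f_surface}: Lemma \ref{lemma_onsurface} together with the preceding verification of \eqref{leading_disc} gives $\langle\dot a,\dot b,\dot B\rangle\cdot\nabla F>0$ on the whole surface, in particular on its $B=1$ slice, so trajectories cross $F=0$ strictly from the $F<0$ side to the $F>0$ side and cannot leave the region $\{F>0\}$ through this face. Moreover, because the surface is "shrinking" as $B$ increases (its $a$-intercept $n(B-1)/m(B-1)$ decreases — or more to the point, the full three-dimensional region bounded by the $B\ge1$ surface is what is genuinely invariant), a trajectory that starts above the $B=1$ line stays in the three-dimensional invariant body for all time and in particular never reaches $b=0$ with $a\ge a^*$; combining the three faces closes off $\Omega$.

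I would then assemble these: given $(a_0,b_0)\in\Omega$, the trajectory cannot exit through $\{a=0\}$, cannot exit through the line $F=0$ (Lemma \ref{lemma_onsurface}), and cannot exit through $\{b=0\}$ on the relevant segment (Lemma \ref{lemma_ontheaxis}); since these faces together bound $\Omega$ in the $B=1$ plane and the region only contracts as $B$ grows, $(a(t),b(t))$ stays in $\{a>0,\ b>0\}$ for all $t\ge 0$. Finally $\dot a=-b(t)a(t)<0$ for $t>0$ gives $a(t)\le a_0$, which is the second assertion. The main obstacle — and the reason the preceding lemmata are needed — is the $b=0$ face: unlike the other two faces, the vector field is not automatically inward there, and controlling the sign of $\dot b=-B^sa^2-a+1$ relative to the line's $a$-intercept uniformly in $B\ge1$ is precisely the content that makes the construction of $n(\cdot)$ and $m(\cdot)$ with the exponent/coefficient conditions in \eqref{condi_parta} unavoidable. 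A secondary subtlety to state carefully is that it is the three-dimensional region under the surface $\{b=m(B-1)a-n(B-1):B\ge1\}$ that is invariant, and the planar statement of the lemma is its restriction to the $B$-evolution; I would phrase the proof so that the monotonicity of $n/m$ in $B$ (which also follows from $M>N$, $m_2>n_2$) makes the $B=1$ slice the "largest" cross-section, so that containment at $t=0$ propagates.
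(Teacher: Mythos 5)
Your argument is correct and is essentially the paper's own proof, just expanded: the paper's version simply writes $a(t)=a_0 e^{-\int_0^t b}$ to get $a>0$ and $a(t)\le a_0$, and then cites Lemmata~\ref{lemma_onsurface} and~\ref{lemma_ontheaxis} as establishing $b(t)>0$, whereas you spell out why those two lemmata control the $b=0$ face and the line $F=0$ respectively, and correctly observe that the statement is really the $B=1$ slice of a three-dimensional invariance claim for \eqref{3by3_system}. The parenthetical appeal to monotonicity of $n/m$ in $B$ is unnecessary (the 3D invariance alone suffices), but it does not affect the validity of the argument.
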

\begin{proof}
Solving the second equation of \eqref{3by3_system} gives
\begin{equation}\label{a_eqn}
a(t)=a_0 e^{-\int^t _0 b(\tau) \, d \tau},
\end{equation}
and this implies that if $a_0 >0$, then $a(t)>0$, $t\geq 0$.  Together with this, Lemmata \ref{lemma_onsurface} and \ref{lemma_ontheaxis} give $b(t)>0$, $t\geq 0$.  Next, $a(t) \leq a_0$ is obtained from the positivity of $b(t)$ for all $t\geq 0$ and \eqref{a_eqn}.

\end{proof}

Now, the final step of the proof is to compare
\begin{equation}\label{comp_epsystem}
\left\{
  \begin{array}{ll}
    \dot{d} = -d^2/2 +A(t) \rho^2 -\rho +1, \\
    \dot{\rho} = -d\rho\\
  \end{array}
\right.
\end{equation}
with
\begin{equation}\label{comp_auxsystem}
\left\{
  \begin{array}{ll}
    \dot{b} = -b^2 /2 -(t+1)^s a^2 -a +1, \\
    \dot{a} = -ba.\\
  \end{array}
\right.
\end{equation}
We recall that
$$-(t+1)^s \leq A(t) \leq  \frac{1}{2}\bigg{(} \frac{\omega_0}{\rho_0} \bigg{)}^2, \ \ t\geq 0.$$
We show the monotonicity relation between two ode systems.
\begin{lemma}\label{lemma_comp}
\begin{equation*}
\left\{
  \begin{array}{ll}
	b(0) < d(0), \\
    0<\rho(0)<a(0)\\
  \end{array}
\right.
implies
\ 
\left\{
  \begin{array}{ll}
	b(t) < d(t), \\
    0<\rho(t)<a(t)\\
  \end{array}
\right.
\ for \ all \ t>0.
\end{equation*}
\end{lemma}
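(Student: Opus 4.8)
The statement is a continuation/comparison principle: starting from $b(0)<d(0)$ and $0<\rho(0)<a(0)$, we want these two strict inequalities to persist. The natural approach is a ``first-crossing-time'' argument. Suppose the conclusion fails, and let $t^*>0$ be the first time at which one of the two inequalities degenerates to an equality. Since both systems have $C^1$ right-hand sides on the relevant region (note $a,b,d,\rho$ all stay finite on $[0,t^*]$ by assumption, and $t\mapsto (t+1)^s$ is smooth), the functions $a,b,\rho,d$ are continuous, so at $t^*$ at least one of $b(t^*)=d(t^*)$, $\rho(t^*)=a(t^*)$ holds while for $t<t^*$ we still have $b<d$ and $\rho<a$.

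\textbf{Step 1 (the density comparison is driven by the divergence comparison).} From $\dot\rho=-d\rho$ and $\dot a=-ba$ we get $\rho(t)=\rho(0)\exp(-\int_0^t d)$ and $a(t)=a(0)\exp(-\int_0^t b)$. Hence, as long as $b(\tau)<d(\tau)$ on $[0,t)$, we have $\int_0^t b<\int_0^t d$, so $\exp(-\int_0^t b)>\exp(-\int_0^t d)$, and combined with $0<\rho(0)<a(0)$ this gives $0<\rho(t)<a(t)$ \emph{strictly}, with no possibility of equality. So $\rho(t^*)<a(t^*)$ strictly; therefore the only way the crossing can occur is $b(t^*)=d(t^*)$, with $b<d$ and $0<\rho<a$ on $[0,t^*)$ and $0<\rho(t^*)<a(t^*)$.

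\textbf{Step 2 (contradiction at $t^*$ via the divergence equations).} At $t^*$ we have $b(t^*)=d(t^*)=:v$. Compare the two rates:
\begin{equation*}
\dot d(t^*)-\dot b(t^*) = \Big(-\tfrac12 v^2 + A(t^*)\rho(t^*)^2 -\rho(t^*)+1\Big) - \Big(-\tfrac12 v^2 -(t^*+1)^s a(t^*)^2 - a(t^*)+1\Big).
\end{equation*}
The $-\tfrac12 v^2$ and $+1$ terms cancel, leaving
\begin{equation*}
\dot d(t^*)-\dot b(t^*) = A(t^*)\rho(t^*)^2 + (t^*+1)^s a(t^*)^2 - \rho(t^*) + a(t^*).
\end{equation*}
Now use $A(t^*)\geq -(t^*+1)^s$ (condition \eqref{poly_condi_2}) and $0<\rho(t^*)<a(t^*)$: since $(t^*+1)^s>0$, we get $A(t^*)\rho(t^*)^2 \geq -(t^*+1)^s\rho(t^*)^2 > -(t^*+1)^s a(t^*)^2$, so the sum of the first two terms is strictly positive; and $-\rho(t^*)+a(t^*)>0$ as well. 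Hence $\dot d(t^*)-\dot b(t^*)>0$, i.e. $d-b$ is strictly increasing at $t^*$. But $(d-b)(t^*)=0$ and $(d-b)(t)>0$ for $t<t^*$ forces $\frac{d}{dt}(d-b)(t^*)\leq 0$, a contradiction. Therefore no such $t^*$ exists, and $b(t)<d(t)$, $0<\rho(t)<a(t)$ for all $t>0$.

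\textbf{Main obstacle.} The argument above is essentially complete, but there are two points requiring care. First, one should confirm that the solutions of the auxiliary system stay bounded on $[0,t^*]$ so that the first-crossing time is well-defined and the right-hand sides are Lipschitz there — this follows because on the invariant region (Lemmata \ref{lemma_onsurface}--\ref{lemma_invariant}) we have $0<a(t)\le a_0$ and $b$ bounded below, and $b$ cannot blow up to $+\infty$ in finite time on a set where $a$ is bounded and $B^s a^2$ is controlled, since $\dot b\le -b^2/2+1$ then gives an upper bound on $b$ as well; for the $(d,\rho)$ system boundedness on $[0,t^*)$ is guaranteed precisely because it is being squeezed between the auxiliary trajectory and so cannot escape before $t^*$. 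Second, the direction of the inequality $A(t^*)\rho^2 > -(t^*+1)^s a^2$ genuinely uses both $A\ge -(t+1)^s$ and the \emph{strict} ordering $\rho<a$ together with $(t+1)^s>0$; this is the only place the hypothesis \eqref{poly_condi_2} enters, and it is the crux of why the ``shrinking'' invariant region for the auxiliary system transfers to a regularity statement for the Euler–Poisson system. (In the exponential case one replaces $(t+1)^s$ by $e^t$ throughout, and the argument is identical.)
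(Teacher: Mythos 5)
Your proof is correct and follows the same first-crossing-time strategy as the paper: you rule out a density crossing via the explicit exponential formulas for $a$ and $\rho$, then derive a sign contradiction for $\dot d-\dot b$ at the putative time where $b=d$, using $A(t)\ge-(t+1)^s$ together with $0<\rho<a$. The only cosmetic difference is that the paper works with $\dot b-\dot d$ rather than $\dot d-\dot b$; the added remarks on boundedness are a reasonable supplement but not part of the paper's argument.
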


\begin{proof}
Suppose $t_1$ is the earliest time when the above assertion is violated. Consider
$$a(t_1) = a(0) e^{-\int^{t_1} _ 0 b(\tau) \, d \tau} > \rho(0) e^{-\int^{t_1} _0 d(\tau) \, d \tau} = \rho (t_1).$$
Therefore, it is left with only one possibility that $d(t_1) = b(t_1).$ Consider
\begin{equation}\label{diff_systems}
\dot{b}-\dot{d} =-\frac{1}{2} (b^2 -d^2) - (t+1)^s a^2 - A(t) \rho^2 - a + \rho.
\end{equation}
Since $b(t)-d(t) <0$ for $t<t_1$ and $b(t_1) - d(t_1)=0$, hence at $t=t_1$, we have
$$\dot{b}(t_1)-\dot{d} (t_1) \geq 0.$$
But the right hand side of \eqref{diff_systems},  when it is evaluated at $t=t_1$, is negative. Indeed
\begin{equation*}
\begin{split}
&-\frac{1}{2} (b^2 (t_1) -d^2 (t_1)) - (t_1 +1)^s a^2 (t_1) - A(t_1) \rho^2 (t_1) - a(t_1) + \rho(t_1)\\
&= - (t_1 +1)^s a^2 (t_1) - A(t_1) \rho^2 (t_1) - a(t_1) + \rho(t_1)\\
&=(t_1 +1)^s \big{(}  -a^2 (t_1) + \rho^2 (t_1) \big{)} +\rho^2 (t_1) \big{(} -(t_1+1)^s -A(t_1)  \big{)} - a(t_1) + \rho(t_1),
\end{split}
\end{equation*}
so $a(t_1) > \rho(t_1)$ and $-(t_1 +1)^s \leq A(t_1)$ give the desired result. This leads to the contradiction.
\end{proof}

\begin{lemma}\label{lemma_dbound}
 Consider \eqref{comp_epsystem}. If there exists $\rho_M  >0$ such that $\rho(t) \leq \rho_M$, $\forall t \geq 0$, then $d(t)$ is bounded from above for all $d_0$.
\end{lemma}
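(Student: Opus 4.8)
\textbf{Proof proposal for Lemma \ref{lemma_dbound}.}

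The plan is to rule out the only mechanism by which $d$ could become unbounded above, namely a finite‑time blow‑up of $d$, and then separately control $d$ on any finite interval where it is finite. First I would observe that along any characteristic we have $\rho(t) = \rho_0 \exp(-\int_0^t d(\tau)\,d\tau)$, so the hypothesis $\rho(t) \le \rho_M$ for all $t$ is equivalent to $\int_0^t d(\tau)\,d\tau \ge \log(\rho_0/\rho_M)$ for all $t$; in particular $d$ cannot stay negative and large forever. Second, from the first equation of \eqref{comp_epsystem} and the uniform upper bound $A(t) \le \tfrac12(\omega_0/\rho_0)^2 =: A_M$ together with $0 < \rho(t) \le \rho_M$ and $-\rho \le 0$, I get the differential inequality
\begin{equation*}
\dot d \le -\tfrac12 d^2 + A_M \rho_M^2 + 1 =: -\tfrac12 d^2 + K,
\end{equation*}
where $K = A_M\rho_M^2 + 1$ is a fixed constant depending only on $\rho_M$ and the initial data.

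Now the key point: this is a Riccati differential inequality with \emph{constant} coefficients, so it has a finite upper bound independent of time. Concretely, if ever $d(t_0) \ge \sqrt{2K}$ at some time $t_0$, then for $t \ge t_0$ we have $\dot d \le -\tfrac12 d^2 + K \le 0$ as long as $d$ stays $\ge \sqrt{2K}$, so $d$ is nonincreasing and hence $d(t) \le d(t_0)$; more carefully, a standard comparison with the autonomous ODE $\dot y = -\tfrac12 y^2 + K$ shows that $d(t) \le \max\{d(0), \sqrt{2K}\}$ for all $t \ge 0$ at which the solution exists, because the set $\{d \le \max(d(0),\sqrt{2K})\}$ is forward invariant for the inequality. (The solution $y$ of $\dot y = -\tfrac12 y^2 + K$ starting above $\sqrt{2K}$ decreases monotonically toward $\sqrt{2K}$ and never exceeds its initial value, and starting below $\sqrt{2K}$ it cannot cross the equilibrium $\sqrt{2K}$ from below.) This gives the a priori bound
\begin{equation*}
d(t) \le \max\bigl\{\, d_0,\ \sqrt{2K}\,\bigr\} = \max\Bigl\{\, d_0,\ \sqrt{2\bigl(A_M\rho_M^2 + 1\bigr)}\,\Bigr\}
\end{equation*}
for all $t \ge 0$, which is exactly the assertion. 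The one gap to close rigorously is that the comparison argument a priori only controls $d$ as long as it exists; but an upper bound on $d$ together with the lower bound on $d$ coming from $\rho \le \rho_M$ (which forces $\int_0^t d \ge \log(\rho_0/\rho_M)$, precluding $d \to -\infty$ on a finite interval in an integrated sense) and the smooth dependence in \eqref{comp_epsystem} show the solution extends for all time, so the bound is global.

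The main obstacle is conceptual rather than technical: one must be careful that "bounded above" really does follow from a one‑sided Riccati inequality even though $d$ itself is not assumed a priori global — the resolution is that the upper bound is precisely what prevents the $-\tfrac12 d^2$ term from being overwhelmed, and the hypothesis $\rho \le \rho_M$ is what is needed to make the forcing term $A(t)\rho^2$ in \eqref{comp_epsystem} uniformly bounded above (recall $A(t)$ has no lower bound in general, which is why the lower control on $d$ is a separate, more delicate matter handled elsewhere, but is not needed for this lemma). I would therefore present the proof as: (i) derive the constant‑coefficient Riccati inequality using $A(t) \le A_M$ and $\rho \le \rho_M$; (ii) invoke forward invariance of $\{d \le \max(d_0,\sqrt{2K})\}$; (iii) note the bound is uniform in $t$, which is the claim.
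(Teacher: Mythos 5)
Your proposal is correct and uses essentially the same argument as the paper: bound $A(t)$ above by $A_M = \tfrac12(\omega_0/\rho_0)^2$ and use $0<\rho\le\rho_M$ to reduce the first equation of \eqref{comp_epsystem} to a constant‑coefficient Riccati inequality $\dot d\le -\tfrac12 d^2+K$, then invoke forward invariance of $\{d\le\max(d_0,\sqrt{2K})\}$. The only cosmetic difference is your choice of $K=A_M\rho_M^2+1$ (dropping $-\rho$ outright), whereas the paper keeps it and takes $K=\max\{1,\,A_M\rho_M^2-\rho_M+1\}$, which is a slightly sharper but logically equivalent constant.
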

\begin{proof}
Since $A(t)\leq   \frac{1}{2}( \frac{\omega_0}{\rho_0} )^2$ and $w>0$, we have
\begin{equation*}
\begin{split}
\dot{d}&= -\frac{1}{2}d^2 + A(t)\rho^2 - \rho +1\\
&\leq  -\frac{1}{2}d^2 +    \frac{1}{2}( \frac{\omega_0}{\rho_0} )^2 \rho^2 - \rho +1\\
&\leq  -\frac{1}{2}d^2  + \max\{ 1, w\rho^2 _M - \rho_M +1 \}.
\end{split}
\end{equation*}
Thus,
$$d(t) \leq \max \big{\{} d_0, \sqrt{2\max\{ 1, w\rho^2 _M - \rho_M +1 \} }  \big{\}}.$$
\end{proof}

The last step of proving the theorem is to combine the comparison principle in Lemma \ref{lemma_comp} with Lemma \ref{lemma_invariant}.
Note that $\Omega$ is  an open set and given any initial data $(\rho_0 , d_0) \in \Omega$ for system \ref{comp_epsystem}, we can find $\epsilon>0$ and initial data $(a_0 , b_0):=(\rho_0 +\epsilon , d_0 -\epsilon) \in \Omega$ for system \ref{comp_auxsystem}. Therefore, by lemmata  \ref{lemma_comp} and \ref{lemma_invariant},
$$0<\rho(t) < a_0, \ \ and \ \ 0< d(t), \ \ \forall t\geq 0.$$
In addition to this, by Lemma \ref{lemma_dbound}, $\rho(t)<a_0$ implies that $d(t)$ is bounded from above for all $t\geq 0$. This completes the proof.

$$$$

\section{Proof of Theorem \ref{thm_exp}}

We start this section by considering the following nonlinear ode system with the time dependent coefficient,
\begin{equation*}
\left\{
  \begin{array}{ll}
    \dot{b} = -b^2 /2 - e^t  a^2 -a +1, \\
    \dot{a} = -ba.\\
  \end{array}
\right.
\end{equation*}

Setting $B(t)=e^t$, one can rewrite the system as follows:
\begin{equation}\label{3by3_system_exp}
\left\{
  \begin{array}{ll}
    \dot{b} = -\frac{1}{2}b^2 - B a^2 -a +1, \\
    \dot{a} = -ba\\
    \dot{B} =B
  \end{array}
\right.
\end{equation}
with $(a , b , B)\big{|}_{t=0} = (a_0 , b_0 , B_0 =1).$

We shall find set of initial data for which the solution of \eqref{3by3_system_exp} exists for all time. Consider surface
$$b=m(B-1) a - n(B-1), \ B \geq 1$$
in $(a,b,B)$ space where $m(\cdot)$ and $n(\cdot)$ are positive on $[0,\infty$) and continuously differentiable. We find conditions on $m(\cdot)$ and $n(\cdot)$ such that trajectory $(a,b,B)$ stays on one side of
$$F(a,b,B)=b-m(B-1)a + n(B-1)=0.$$
In order to do that, it requires
\begin{equation}\label{exp_gradient}
\langle \dot{a}, \dot{b}, \dot{B} \rangle \cdot \nabla F >0,
\end{equation}
on the surface $F(a ,b , B)=0$, where
$$\nabla F = \langle -m(B-1), 1, -m'(B-1)a + n'(B-1) \rangle.$$ 

Expanding the dot product on the surface the left hand side of \eqref{exp_gradient} can be written as
\begin{equation*}
\begin{split}
&\big{\langle} -ba,  -\frac{1}{2}b^2 - B a^2 -a +1 , B \big{\rangle} \cdot \langle -m(B-1), 1, -m'(B-1)a + n'(B-1) \rangle \\
&\Rightarrow ba m(B-1) -\frac{1}{2}b^2 - B a^2 -a +1 -Bm'(B-1)a + Bn'(B-1)\\
&\Rightarrow (ma -n) a m - \frac{1}{2}(ma -n)^2 - B a^2 - a +1 -Bm' a + Bn'\\
&\Rightarrow \bigg{(} \frac{1}{2}m^2 - B \bigg{)} a^2 - (1+Bm')a - \frac{1}{2}n^2 + Bn' +1. 
\end{split}
\end{equation*}
Here and below $m$ and $n$ are evaluated at $B-1$. Thus, on the surface $F(a ,b , B)=0$, \eqref{exp_gradient} is equivalent to
\begin{equation}\label{qd_1_exp}
\bigg{(} \frac{1}{2}m^2 - B \bigg{)} a^2 - (1+Bm')a - \frac{1}{2}n^2 + Bn' +1 >0.
\end{equation}

We will find $m$ and $n$ such that the above inequality holds for some set of $(a, b ,B)$. The inequality is quadratic in $a$, and the nonnegative root of the quadratic equation is given by
$$R_* (B-1) = \frac{(1+Bm') + \sqrt{(1+Bm')^2 + (m^2 - 2B)(n^2 -2Bn' -2)}}{m^2 - 2B},$$
provided that 
\begin{equation}\label{leading_disc_exp}
m^2 - 2B >0 \ \  and \ \ D=(1+Bm')^2 + (m^2 - 2B)(n^2 -2Bn' -2) \geq 0.
\end{equation}
For each fixed $B$,  $\frac{n}{m}$ is the $a$-intercept of  $F(a ,b , B)=0$, so 
\begin{equation}\label{root_ineq_exp}
R_*  (B-1)< \frac{n(B-1)}{m(B-1)}
\end{equation}
implies
\eqref{qd_1_exp}, for all $a \geq \frac{n(B-1)}{m(B-1)}$. 

Expanding, completing square and simplifying \eqref{root_ineq_exp} give
\begin{equation*}
\begin{split}
& m \sqrt{(1+Bm')^2 + (m^2 - 2B)(n^2 -2Bn' -2)} < m^2 n -2B n - m - Bm' m\\
&\Rightarrow 2(m^2 - 2B) \big{\{}  (1+Bn')m^2 - (1+m'B)mn - B n^2 \big{\}} > 0.
\end{split}
\end{equation*}
Since $m^2 > 2B$, it suffices to have
\begin{equation*}
(1+Bn')m^2 - (1+m'B)mn - B n^2 >0.
\end{equation*}
Note that $m$ and $n$ are evaluated at $B-1$, so let $x=B-1$, and writing the  above inequality in terms of $x$ gives,
\begin{equation}\label{qd_2_exp}
\big{\{}1+(x+1)n'(x) \big{\}}m^2 (x) - \big{\{}1+(x+1)m' (x) \big{\}} m(x)n(x) - (x+1)n^2(x) >0.
\end{equation}

We also rewrite the discriminant in terms of $x$ variable,
\begin{equation}\label{}
D=\big{\{} 1+ (x+1)m'(x) \big{\}}^2 + \big{\{} m^2 (x) -2(x+1) \big{\}} \big{\{} n^2 (x) -2(x+1) n'(x) -2 \big{\}}.
\end{equation}

$$$$

\textbf{Construction of} $\mathbf{n(x)}$ \textbf{and} $\mathbf{m(x)}$. We prove the existences of $m(x)$ and $n(x)$. More precisely, we find simple polynomials 
$$n(x):=n_1 (x+n_2)^N, \ \  and \ \ m(x)=m_1 (x+m_2)^M, \ \ M>N$$
that satisfy \eqref{exp_gradient} for all $x\geq 0$.  Unlike with Section \ref{section_poly}, when $N>0$, the discriminant can be positive but \eqref{qd_2_exp} is not satisfied for large $x$. Indeed, writing the highest terms from \eqref{qd_2_exp} gives
$$n_1 m^2 _1 N x^{N+2M} - n_1 m^2 _1 M x^{N+2M},$$
which is not positive because $M>N$. So we shall seek conditions that will lead to $D<0$ for all $x\geq 0$, which in turn implies \eqref{qd_1_exp}. 

First, we see that 
$$m^2 - 2B = m^2 (B-1) -2B = m^2 (x) - 2(x+1)=m^2 _1 (x+m_2)^2 - 2(x+1)$$
is positive for all $x\geq 0$ when $m_1 > \sqrt{2}$ and $m_2 >1$. Thus \eqref{qd_1_exp} hold if $D<0$. 

\begin{lemma}\label{lemma_onsurface_exp}
If $N<0$, $m_2 >1$ and 
$$M > m_2 \sqrt{2},$$ then $D <0$, for all $x \geq 0$.
\end{lemma}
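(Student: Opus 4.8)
The plan is to establish $D<0$ for all $x\geq 0$ by showing that the (positive) second factor $\{m^2(x)-2(x+1)\}\{n^2(x)-2(x+1)n'(x)-2\}$ is dominated in absolute value — after noting it is actually negative in its second factor — by comparing growth rates. More precisely, first I would observe that with $N<0$ we have $n(x)=n_1(x+n_2)^N$ decreasing, $n'(x)=n_1 N(x+n_2)^{N-1}<0$, so $-2(x+1)n'(x)>0$; hence the sign of $n^2(x)-2(x+1)n'(x)-2$ is not obviously negative and I must instead bound $D$ directly. The cleaner route: write
\begin{equation*}
D=\big\{1+(x+1)m'(x)\big\}^2+\big\{m^2(x)-2(x+1)\big\}\big\{n^2(x)-2(x+1)n'(x)-2\big\},
\end{equation*}
and note $n^2(x)=n_1^2(x+n_2)^{2N}\leq n_1^2 n_2^{2N}$ and $-2(x+1)n'(x)=-2n_1N(x+1)(x+n_2)^{N-1}\leq -2n_1N(x+n_2)^{N}\cdot\frac{x+1}{x+n_2}$, which for $n_2>1$ is bounded; so the second factor $n^2-2(x+1)n'-2$ is bounded above by a constant, say $K$, uniformly in $x$. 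Meanwhile $m^2(x)-2(x+1)=m_1^2(x+m_2)^2-2(x+1)\geq (m_1^2-2)(x+m_2)^2 \to\infty$. So it suffices to show the negative term $-\{m^2-2(x+1)\}\cdot|K|$ — wait, this only helps if $K<0$.

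So the genuinely correct approach is: we must show $n^2(x)-2(x+1)n'(x)-2<0$, i.e. that $n(x)$ is small enough. But with $N<0$, $n(x)\le n_1 n_2^{N}$ and the hypothesis does not force $n_1$ small. Hence I expect the real argument instead bounds $D$ by balancing the first square against the product: since the first term grows like $(x+1)^2 m'^2 \sim m_1^2M^2(x+m_2)^{2M-2}(x+1)^2$ while the product's negative part grows like $-(m_1^2-2)(x+m_2)^2\cdot 2$ (the $-2$ inside the second factor) $\sim -2m_1^2(x+m_2)^2$, for $M$ close to $1$ these are comparable, and the condition $M>m_2\sqrt2$ is exactly what makes the negative contribution win. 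Concretely I would expand
\begin{equation*}
D\le \{1+(x+1)m'\}^2 - 2\{m^2-2(x+1)\} + \{m^2-2(x+1)\}\{n^2-2(x+1)n'\},
\end{equation*}
show the last term is lower-order (it carries total degree roughly $2M+2N$ or less versus $2M$ from $-2m^2$, using $N<0$), and then reduce to checking $\{1+(x+1)m'\}^2<2m^2-\text{(lower order)}$, i.e. asymptotically $m_1^2M^2 x^{2M}<2m_1^2 x^{2M}$ after accounting for the extra powers of $(x+1)$ inside $m'$ — this is where $M<m_2\sqrt2$ would fail and $M>m_2\sqrt2$ combined with $m_2>1$ forces the leading coefficients to the right side.

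The key steps in order: (1) record the signs of $n,n',m,m'$ under $N<0$, $M>N$, $m_1>\sqrt2$, $m_2>1$; (2) split $D$ into the perfect square, the cross term $-2\{m^2-2(x+1)\}$ coming from the $-2$ in the second bracket, and the remainder $\{m^2-2(x+1)\}\{n^2-2(x+1)n'\}$; (3) bound the remainder by a polynomial of strictly smaller leading exponent than $2M$ in $x$, using $N<0$ so that $n^2,(x+1)n'$ contribute non-positive or vanishing-at-infinity factors; (4) reduce the claim $D<0$ to a single polynomial inequality in $x\ge0$ whose leading coefficient comparison is governed by $M$ versus $m_2\sqrt2$, with $m_2>1$ ensuring the constant and sub-leading terms also have the correct sign; (5) verify the inequality at $x=0$ and monotonicity (or just direct coefficient-wise domination) to conclude for all $x\ge0$. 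The main obstacle I anticipate is step (3)–(4): carefully tracking that every term generated by expanding $\{m_1^2(x+m_2)^2-2(x+1)\}\{n_1^2(x+n_2)^{2N}-2n_1N(x+1)(x+n_2)^{N-1}\}$ genuinely has exponent $\le 2M-\varepsilon$ (not merely formally lower) so that it can be absorbed, and pinning down exactly why $M>m_2\sqrt2$ rather than merely $M>\sqrt2$ is the sharp threshold — presumably because the worst term is $-2m_1^2 m_2^2$ (the value of $-2m^2$-type contributions re-expanded) competing against $m_1^2M^2$ from $\{(x+1)m'\}^2$ near $x=0$, giving $M^2>2m_2^2$.
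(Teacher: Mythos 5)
Your plan does correctly single out the terms that must compete to give $D<0$: after splitting $n^2-2(x+1)n'-2 = -2 + (n^2-2(x+1)n')$, the square $\{1+(x+1)m'\}^2$ must be dominated by $2\{m^2-2(x+1)\}$ once the lower-order correction $\{m^2-2(x+1)\}\{n^2-2(x+1)n'\}$ is absorbed, and you correctly write the leading comparison as $m_1^2M^2x^{2M}$ versus $2m_1^2x^{2M}$. But your final conclusion reverses the inequality. Since $m_2>1$, the hypothesis $M>m_2\sqrt2$ forces $M^2>2m_2^2>2$, hence $m_1^2M^2>2m_1^2$: the square is the \emph{larger} term asymptotically, which gives $D>0$ for large $x$, not $D<0$. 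The same reversal occurs at $x=0$, where the comparison is $m_1^2M^2m_2^{2M-2}$ against $2m_1^2m_2^{2M}$, i.e.\ $M^2$ against $2m_2^2$, and $M>m_2\sqrt2$ again puts the square on top. Your own warnings earlier in the proposal ("this only helps if $K<0$"; "the hypothesis does not force $n_1$ small") were well-founded and should have stopped you before the last sentence, which asserts the opposite of what your own asymptotics show. A minor recurring slip: you write $m^2(x)=m_1^2(x+m_2)^2$ in several places; with $m(x)=m_1(x+m_2)^M$ this should be $m_1^2(x+m_2)^{2M}$.

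You should also be aware that this directional difficulty is present in the paper's own proof of this lemma. From $n^2-2(x+1)n'>0$ the paper concludes $n^2-2(x+1)n'-2>-2$, hence $D>\{1+(x+1)m'\}^2-2\{m^2-2(x+1)\}$ --- a \emph{lower} bound --- and then shows this lower bound is positive, which proves $D>0$, not $D<0$. Indeed the lemma as stated appears to be false: taking $N=-1$, $m_1=m_2=2$, $M=3>2\sqrt2$, $n_1=1$, $n_2=1.5$ (all hypotheses of Theorem~\ref{thm_exp} are met), one computes $D(0)=25^2 + 254\cdot(-\tfrac23)\approx 455.7>0$. The leading-order requirement for $D<0$ is $M<\sqrt2$, which is incompatible with $M>m_2\sqrt2$ and $m_2>1$. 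So the gap in your proposal is genuine and cannot be closed by pushing the same estimate harder; it would require the opposite constraint on $M$ (together with additional control at $x=0$) or an entirely different route to \eqref{qd_1_exp}.
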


\begin{proof} 
Since $N<0$, we notice that
$$ n^2 (x) -2(x+1) n'(x) = n^2 _1 (x+n_2)^{2N} - 2(x+1)n_1 N (x+n_2)^{N-1} >0.$$
Thus, it suffices to have
$$\big{\{} 1+ (x+1)m_1 M (x+m_2)^{M-1}  \big{\}}^2 + \big{\{} m^2 _1 (x+m_2)^{2M} -2(x+1)  \big{\}} \cdot (-2) >0.$$
Dropping 1 and the last positive term, we see that the above inequality is satisfied if
$$m^2 _1 (x + m_2)^{2M-2} \big{\{} M^2 (x+1)^2 -2 (x+m_2)^2  \big{\}} >0$$
or
$$\frac{M}{\sqrt{2}} > \frac{ (x+ m_2)}{x+1} = 1+ \frac{m_2 -1 }{x+1}.$$
Since the right hand side is decreasing in $x$, $\frac{M}{\sqrt{2}} > m_2$ gives the desired result.
\end{proof}

From Lemma \ref{lemma_onsurface_exp},  we can see that any trajectory on $F(a,b,B)=0$ can't cross $F(a,b,B)=0$ from left to right. We continue to construct an invariant region for the system \eqref{3by3_system_exp}. It is easy to see that trajectory  $(a, 0, B)$ on the $aB$ plane moves upward if  $a< a^* $ (downward if $a > a^*$) where
$$a^* (B-1)=\frac{1-\sqrt{1+4B}}{-2B}.$$
In order to secure the invariant region we need 
$$a^* (B-1) > \frac{n(B-1)}{m(B-1)},$$
for all $B \geq 1$.  Let $x=B-1$ and the above inequality is fulfilled by the following lemma.

\begin{lemma}\label{lemma_ontheaxis_2}
If $M>N+1$, $m_2 > n_2 >1$ and, 
$$n_2 >  \bigg{\{} \frac{n_1}{2m_1}(1+\sqrt{5}) \bigg{\}}^{1/(M-N-1)},$$
then it holds,
$$a^* (x) > \frac{n(x)}{m(x)},$$
for all $x\geq 0$.
\end{lemma}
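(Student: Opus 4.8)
The plan is to mirror the proof of Lemma \ref{lemma_ontheaxis} from the polynomial case, carefully tracking where the exponential growth of $B$ forces a change of argument. We need to show $a^*(x) > n(x)/m(x)$ for all $x \geq 0$, where $a^*(x) = \frac{-1+\sqrt{1+4(x+1)}}{2(x+1)}$ since $B = x+1$ here. As in the earlier lemma, first I would bound $RHS = \frac{n_1(x+n_2)^N}{m_1(x+m_2)^M}$ from above: using $n_2 < m_2$ (and $N<0$ or $N$ small, whichever applies under \eqref{condi_partb}, but here $m_2 > n_2 > 1$ is assumed) we replace $(x+n_2)^N$ by $(x+m_2)^N$ when $N \geq 0$, giving $RHS < \frac{n_1}{m_1(x+m_2)^{M-N}}$. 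So it suffices to prove
\begin{equation*}
-1 + \sqrt{1+4(x+1)} > \frac{2n_1(x+1)}{m_1(x+m_2)^{M-N}}.
\end{equation*}

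Next I would use $m_2 > 1$ to absorb one power of $(x+1)$ into $(x+m_2)$: since $x+1 \leq x+m_2$, the right-hand side is at most $\frac{2n_1}{m_1(x+m_2)^{M-N-1}}$, so it suffices to show $-1+\sqrt{1+4(x+1)} > \frac{2n_1}{m_1(x+m_2)^{M-N-1}}$. Squaring after adding $1$ (both sides positive) and simplifying — exactly as in Lemma \ref{lemma_ontheaxis} — reduces matters to
\begin{equation*}
4(x+1) > \frac{4n_1^2}{m_1^2(x+m_2)^{2(M-N-1)}} + \frac{4n_1}{m_1(x+m_2)^{M-N-1}}.
\end{equation*}
Since $M > N+1$ by hypothesis, the right-hand side is decreasing in $x$ while the left-hand side is increasing, so it suffices to verify the inequality at $x=0$: $4 > \frac{4n_1^2}{m_1^2 m_2^{2(M-N-1)}} + \frac{4n_1}{m_1 m_2^{M-N-1}}$. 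Treating this as a quadratic in $m_2^{M-N-1}$ (with positive leading coefficient after clearing), positivity holds precisely when $m_2^{M-N-1}$ exceeds the positive root, i.e. $m_2^{M-N-1} > \frac{n_1}{2m_1}(1+\sqrt 5)$, which is the stated hypothesis.

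The only real subtlety — and the step I would watch most carefully — is the very first bound on $RHS$: in the exponential setting $N$ may be positive (the construction only insists $M > N$), so the replacement $(x+n_2)^N \to (x+m_2)^N$ needs $N \geq 0$, whereas for $N < 0$ one instead simply uses $(x+n_2)^N < n_2^N$ combined with $(x+m_2)^{-M}$ decreasing; either way the bound $RHS < n_1/(m_1(x+m_2)^{M-N})$ or a harmless variant holds, but the write-up must not silently assume the sign of $N$. Everything after that is the same monotonicity-at-$x=0$ reduction used in Lemma \ref{lemma_ontheaxis}, so I do not expect any genuine obstacle; the lemma is essentially the $s=1$ specialization of the earlier one with the book-keeping adjusted for $B = e^t$ rather than $B = t+1$ (which is why $M-N-s$ is replaced throughout by $M-N-1$).
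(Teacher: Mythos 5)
Your proof has a genuine gap, and it is exactly at the step you yourself flagged as ``the only real subtlety.'' The paper's proof does not compare $(x+n_2)^N$ with $(x+m_2)^N$; it instead bounds the \emph{denominator} factor, using $m_2>n_2$ and $M>0$ to write
\begin{equation*}
RHS=\frac{n_1(x+n_2)^N}{m_1(x+m_2)^M}<\frac{n_1(x+n_2)^N}{m_1(x+n_2)^M}=\frac{n_1}{m_1(x+n_2)^{M-N}},
\end{equation*}
a step that is valid for \emph{every} sign of $N$ and, crucially, produces the factor $(x+n_2)^{M-N}$ rather than $(x+m_2)^{M-N}$. From there the monotonicity-at-$x=0$ argument delivers precisely the stated hypothesis $n_2^{M-N-1}>\frac{n_1}{2m_1}(1+\sqrt 5)$.

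Your replacement $(x+n_2)^N\to(x+m_2)^N$ requires $N\geq 0$, but in Theorem \ref{thm_exp} the conditions \eqref{condi_partb} impose $N<0$, so the inequality actually runs the wrong way: for $N<0$ one has $(x+n_2)^N>(x+m_2)^N$, and your claimed bound $RHS<\frac{n_1}{m_1(x+m_2)^{M-N}}$ is false. The ``harmless variant'' you sketch for $N<0$ — use $(x+n_2)^N<n_2^N<1$ and then $(x+1)\leq(x+m_2)$ — is legitimate, but carry it through: it yields the sufficient condition $m_2^{M-1}>\frac{n_1}{2m_1}(1+\sqrt 5)$, not $n_2^{M-N-1}>\frac{n_1}{2m_1}(1+\sqrt 5)$. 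Since $N<0$ gives $M-1<M-N-1$, and only $m_2>n_2$ is assumed, neither of these conditions implies the other in general (try $m_2=2$, $n_2=1.5$, $M=3$, $N=-2$). So your argument, if completed, would prove a different lemma with a hypothesis not guaranteed by \eqref{condi_partb}, and would not establish the lemma as stated. The fix is simple and removes all case analysis: lower the base in the denominator rather than raise the base in the numerator.
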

\begin{proof}
Since $a^* (x)= \frac{-1 + \sqrt{1 + 4(x+1)}}{2(x+1)}$,  we show that
$$\underbrace{\frac{-1 + \sqrt{1 + 4(x+1)}}{2(x+1)}}_{LHS} > \underbrace{ \frac{n_1 (x+n_2)^N}{m_1 (x+m_2)^M}}_{RHS}.$$
Since $1 < n_2 < m_2$, we have  
$$RHS= \frac{n_1}{m_1} \frac{1}{(x+n_2)^{-N}} \frac{1}{(x+m_2)^M} < \frac{n_1}{m_1} \frac{1}{(x+n_2)^{M-N}}.$$
Thus, it suffice to prove
$$LHS=  \frac{-1 + \sqrt{1 + 4(x+1)}}{2(x+1)} > \frac{n_1}{m_1 (x+ n_2)^{M-N}},$$
or equivalently
$$-1 + \sqrt{1+ 4(x+1)} > \frac{2n_1(x+1)}{m_1(x+n_2)^{M-N}}.$$

Since $n_2 > 1$, the above inequality holds if
$$-1 + \sqrt{1+ 4(x+1)} >  \frac{2n_1 }{ m_1 (x+n _2)^{M-N-1}}. $$
Upon simplification, the inequality is reduced to
\begin{equation}
4(x+1) > \frac{4n^2 _1}{m^2 _1 (x+n_2)^{2(M-N-1)}} + \frac{4n_1}{m_1 (x+n_2)^{M-N-1}}.
\end{equation}
Since $M-N-1 >0$, we note that the right hand side of the above inequality is decreasing in $x$ and the left hand side is increasing in $x$. Thus, it suffices to hold at $x=0$, that is,
$$4 > \frac{4n^2 _1}{m^2 _1}\cdot \frac{1}{ n^{2(M-N-1)} _2} + \frac{4n_1}{m_1}\cdot \frac{1}{n^{M-N-1} _2},$$
or equivalently 
$$m^2 _1 n^{2(M-N-1)} _2 - m_1 n_1 n^{M-N-1} _2 - n^2 _1 >0.$$
The left hand side is quadratic in $n^{M-N-1} _2$ and positive when $n^{M-N-1} _2$ is greater than the positive root of the quadratic equation. That is,
$$n^{M-N-s} _2 > \frac{m_1 n_1 + \sqrt{m^2 _1 n^2 _1 + 4 m^2 _1 n^2 _1}}{2m^2 _1}= \frac{n_1}{2m_1}(1+\sqrt{5}).$$
\end{proof}

We are done with constructing the ``shrinking" invariant region and we recycle  Lemma  \ref{lemma_invariant} to summarize its property. Furthermore, the monotone properties in Lemma \ref{lemma_comp} hold between
\begin{equation}
\left\{
  \begin{array}{ll}
    \dot{d} = -d^2/2 +A(t) \rho^2 -\rho +1, \\
    \dot{\rho} = -d\rho\\
  \end{array}
\right.
\end{equation}
and
\begin{equation}
\left\{
  \begin{array}{ll}
    \dot{b} = -b^2 /2 -e^t a^2 -a +1, \\
    \dot{a} = -ba.\\
  \end{array}
\right.
\end{equation}
The remaining part of the proof is analogous to the proof of Theorem \ref{thm_poly}.

\bibliographystyle{abbrv}

\end{document}